\newcommand{\bC}{{\mathbb C}}
\newcommand{\bN}{{\mathbb N}}
\newcommand{\bR}{{\mathbb R}}
\newcommand{\bS}{{\mathbb S}}
\newcommand{\bB}{{\mathbb B}}
\def\pf{\proof}
\def\bpf{\pf}
\newcommand{\epf}{ $\Box$\medskip}
\def\A{{\mathcal A}}
\def\H{{\mathcal H}}
\def\K{{\mathcal K}}
\def\O{{\mathcal O}}
\def\ss{\subseteq}
\def\BMO{B\! M\! O}
\def\BMOA{B\! M\! O\! A}
\def\LMOA{L\! M\! O\! A}
\def\dst{\displaystyle}
\def\eps{\varepsilon}
\def\ov{\overline}
\def\p{\partial}
\def\lsim{\raisebox{-1ex}{$~\stackrel{\textstyle <}{\sim}~$}}
\def\gsim{\raisebox{-1ex}{$~\stackrel{\textstyle >}{\sim}~$}}
\def\lux{\rm lux}
\newcounter{rea}
\newcommand{\comment}[1]{}
\newcounter{res}
\newtheorem{thm}{Theorem}[section]
\newtheorem{prop}[thm]{Proposition}
\newtheorem{cor}[thm]{Corollary}
\newtheorem{lem}[thm]{Lemma}
\newtheorem{defn}[thm]{Definition}
\newtheorem{remark}[thm]{Remark}
\begin{document}

\title[Hardy-Orlicz spaces]{Hankel operators and weak factorization \\ for Hardy-Orlicz
spaces}

\author[A. Bonami]{Aline Bonami}
\address{MAPMO-UMR 6628,
D\'epartement de Math\'ematiques, Universit\'e d'Orleans,
45067 Orl\'eans Cedex 2, France}
\email{{\tt Aline.Bonami@univ-orleans.fr}}
\author[S. Grellier]{Sandrine Grellier}
\address{MAPMO-UMR 6628,
D\'epartement de Math\'ematiques, Universit\'e d'Orleans,
45067 Orl\'eans Cedex 2, France}
\email{{\tt Sandrine.Grellier@univ-orleans.fr}}

\subjclass{32A35 32A37 47B35}
\keywords{Hardy Orlicz spaces, atomic decomposition,
finite type domains, weak factorization, logarithmic mean oscillation, BMO with weights, Hankel operator.}

\begin{abstract}
We study the holomorphic Hardy-Orlicz spaces $\H^{\Phi}(\Omega)$,
where $\Omega$ is the unit ball or, more generally,  a convex
domain of finite type or a strictly pseudoconvex domain in
$\bC^n$. The function $\Phi$ is  in particular such that
$\H^{1}(\Omega)\subset \H^{\Phi}(\Omega)\subset \H^{p}(\Omega)$
for some $p>0$. We develop for them maximal characterizations,
atomic and molecular decompositions. We then prove weak
factorization theorems involving the space $\BMOA(\Omega)$.

As a consequence, we  characterize those Hankel operators which
are bounded from  $\mathcal H^\Phi(\Omega)$ into $\mathcal
H^1(\Omega)$.

\end{abstract}

\dedicatory{ This paper is dedicated to the memory of Andrzej Hulanicki who was a colleague, a friend we will never forget.}

\maketitle

\section*{Introduction}

The following work has been motivated by a new kind of
factorization in the unit disc, obtained  in \cite{BIJZ}.  Namely,
the product of a function in $\BMOA$ with a function in the Hardy
space $\H^1$ of holomorphic functions lies in  some
Hardy-Orlicz space defined in terms of the function
$\Phi_1(t):=\frac t{\log(e+t)}$. Conversely,  every holomorphic
function of this Hardy-Orlicz space can be  written as the product
of a function in $\BMOA$ and a function in $\H^1$. This exact
factorization relies heavily on the classical factorization
theorem through Blaschke products and
cannot generalize in higher dimension. On the other hand, it has
been proven by Coifman, Rochberg and Weiss in the seventies
\cite{CRW} that $\H^p$, for $p\leq 1$, admits weak factorization, namely, $F=\sum_jG_jH_j$ with $\sum_j\|G_j\|_q^p\|\|H_j\|_r^p\leq
C_{p q}\|F\|_p^p$ when $\frac 1q+\frac 1r=\frac 1p$. This has been extended later on by Krantz and Li  to strictly
pseudo-convex domains \cite{KL2}, then by Peloso, Symesak and the
authors of the present paper to convex domains of finite type
\cite{BPS}, \cite{GP}. We rely on the methods of these two last papers, which are
somewhat simpler, to obtain the weak factorization of Hardy-Orlicz
spaces under consideration. Note that such a weak factorization
for $\H^p$ is typical of the case $p\leq 1$, in opposition to the
case of the unit disc.

A natural application of such factorizations is the
characterization of classes of symbols of Hankel operators. We
are able to characterize symbols of Hankel operators mapping
continuously  Hardy-Orlicz spaces  into $\H^1$ for a large class of Hardy-Orlicz spaces  containing $\H^1$.  We do this for  all  domains for which we have
weak factorization. However weak factorization is a stronger
property, since the Hardy-Orlicz spaces under consideration are not
Banach spaces. We have given in \cite{BGS} a direct proof of the
fact that Hankel operators are bounded on $\H^1$ in the unit ball
if and only if their symbol is in the space $\LMOA$, without
involving Hardy-Orlicz spaces, even if the idea of weak
factorization indirectly  is present in this Note.

Let us mention, in the same direction, the factorization obtained
by Cohn and Verbitski in the disc \cite{CV}, which allows to
characterize those symbols for which the corresponding Hankel
operator is bounded from $H^2$ into some Hardy-Sobolev space. The
generalization in higher dimension of their factorization seems
much more difficult than ours.\\

At the end of this paper, we state  the same theorems for a class of domains in $\bC^n$, which includes the strictly pseudoconvex domains and the convex domains of finite type. We explain rapidly how to modify the proofs.

\bigskip

Let us give some notations and describe more precisely the
results. Let $\bB^n$ be the unit ball and $\bS^n$ be the unit
sphere in $\bC^n$. Let $\Phi$ be a continuous, positive and
non-decreasing function on $[0,\infty)$. The Hardy-Orlicz space
$\H^{{\Phi}}(\bB^n)$ is defined  as the space of holomorphic
functions $f$ so that
\begin{equation}\label{hardy-orlicz}
  \sup_{0<r<1} \int_{\bS^n}
{\Phi}(|f(rw)|) \,d\sigma(w) <\infty
\end{equation}
where $d\sigma$  denotes the surface measure on $\bS^n$. We
recover Hardy spaces $\mathcal H^p(\bB^n)$ when $\Phi(t)=t^p$. We
are especially interested in the case
$\Phi_p(t)=\frac{t^p}{\log(e+t)^p}$, $0<p\le 1$ since the space
$\H^{\Phi_p}(\bB^n)$ arises naturally in the study of pointwise
product of functions in $\mathcal H^p(\bB^n)$ with functions in
$\BMOA(\bB^n)$ inside the unit ball. Indeed, we prove that the product of
an $\mathcal H^p(\bB^n)$-function and of a $\BMOA(\bB^n)$-function
belongs to the space $\H^{{\Phi_p}}(\bB^n)$ and, conversely, that
there is weak factorization.

 We will
restrict  to concave functions $\Phi$, which satisfy an additional
assumption so that
$\H^1(\bB^n)\subset\H^{{\Phi}}(\bB^n)\subset\H^{p}(\bB^n)$ for
some $0<p\le 1$. In particular, any function $f$ in the Orlicz
space $\H^{{\Phi}}(\bB^n)$ admits a unique boundary function still
denoted by $f$ which, by Fatou Theorem, satisfies $\dst
\int_{\bS^n}\Phi(|f|)d\sigma<\infty$.

We also consider the real Hardy-Orlicz space $H^{\Phi}(\bS^n)$
defined as the space of distributions on $\bS^n$ which have an
atomic decomposition. More precisely, $H^{\Phi}(\bS^n)$ is the set
of distributions $f$ which can be written as $\sum_{j=0}^{\infty}
a_j$, where the $a_j$'s satisfy adapted cancellation properties,
are supported in some ball $B_j$ and are such  that $\sum_j
\sigma(B_j){\Phi}(\Vert a_j\Vert_2\sigma(B_j)^{-\frac 12})<\infty$.

We first prove usual maximal characterizations of  Hardy-Orlicz
spaces. As a corollary, we obtain that the Hardy-Orlicz space
$\H^{\Phi}(\bB^n)$ continuously embeds into $H^{\Phi}(\bS^n)$,
while the Szeg\"o projection is a projection onto
$\H^{\Phi}(\bB^n)$. In particular, every $f\in\H^{\Phi}(\bB^n)$
has boundary values that belong to $H^{\Phi}(\bS^n)$, and $f$ may
be written in terms of the Szeg\"o projection of its atomic
decomposition.  The work of Viviani \cite{Viviani} plays a central role:
atomic decomposition is proved there for Hardy-Orlicz spaces in
the context of spaces of homogeneous type with a restriction on the lower type $p$ of $\Phi$, which, in the case of the unit ball, is the condition $p>\frac{2n}{2n+1}$. We prove the atomic decomposition for all values of $p$, with the same kind of control of the norm as the one obtained by Viviani.

Since the Szeg\"o
projection of an atom is a molecule, we also get a molecular
decomposition as in the classical Hardy spaces (\cite{TW} for
instance).

The atomic decomposition allows to prove a (weak) factorization
theorem on $\H^{\Psi}(\bB^n)$, which coincides with the one for
$\H^p(\bB^n)$ when $\Psi(t)=t^p$. In particular, we generalize the
factorization theorem proved in the disc for $\mathcal H^{\Phi_1}$
in \cite{BIJZ}. More precisely, we prove that, given any
$f\in\H^{\Psi}(\bB^n)$ there exist $f_j\in\H^{\Phi}(\bB^n),g_j\in
\BMOA(\bB^n)$ such that $f=\sum_{j=0}^{\infty}f_jg_j$ where $\Psi$
and $\Phi$ are linked by the relation $\Psi(t)=\Phi\left(\frac
t{\log(e+t)}\right)$.

 As a consequence, we  characterize the class of symbols for which the Hankel
 operators are bounded from $\mathcal H^{\Phi}(\bB^n)$ to $\mathcal H^1(\bB^n)$.
 They belong to the dual space of $\mathcal H^{\Psi}(\bB^n)$, which can be
  identified with the $\BMOA$-space with weight $\rho_\Psi$ where $\rho_\Psi(t)=\frac 1{t\Psi^{-1}(1/t)}$. Weighted $\BMOA$-spaces have been considered by Janson in the  Euclidean space \cite{Janson}. Here they   are
  defined by
\begin{eqnarray*}
\BMOA(\rho_\Psi):=
\left\{f\in\mathcal H^2(\bB^n);\;
\sup_B\inf_{P\in \mathcal P_N(B)} \frac 1{\sigma(B)\rho_\Psi(\sigma(B))}\int_B|f-P|^2d\sigma<\infty \right\}.
\end{eqnarray*}
 where the integral is taken on the unit sphere, $f$ stands for the boundary values of the function and balls are defined for the Koranyi metric.  Moreover $\mathcal P_N(B)$ denotes the set of polynomials of degree $\leq N=N_\Psi$ in an appropriate basis, with $N$ large enough.
 
 When $\Psi=\Phi_1$, this space is usually referred as the space $\LMOA$ of
 functions of logarithmic mean oscillation. Duality  has been proven in $\bR^n$ by \cite{Janson}.
 Viviani proves it as a consequence of the atomic decomposition.
 In the context of holomorphic functions, this is also a consequence
 of the atomic decomposition and the continuity property of the
 Szeg\"o projection.

Our method allows us to characterize $\BMOA(\rho_\Phi)$ as the
class of symbols of Hankel operators which map $\H^\Phi$ into
$\H^1_{\mbox{\rm weak}}$.

\smallskip

As we said, we have chosen to allow the lower type of $\Phi$ to be arbitrarily small, and not only of upper type larger than $\frac{2n}{2n+1}$ (for the unit ball of $\bC^n$, or for a strictly pseudo-convex domain; for a general convex domain of finite type, the critical index is different). This induces many technical difficulties: for instance, it is not sufficient to deal with atoms with mean $0$ and we need extra moment conditions; in parallel, one has to deal with polynomials of positive degree to define  the dual space $\BMO$, and not only with constants.

 \smallskip
Here and in what follows, $\mathcal H(\bB^n)$ denotes the space of
holomorphic functions in $\bB^n$. For two functions $f$ and $g$, we use the notation $f\lsim g$ when
there is some constant $c$ such that $f(w)\le c\, g(w)$. Here
$w$ stands for the parameters that we are interested in
(typically, the constant $c$ will only depend on the geometry of
the domain under consideration).  We use the symbols $\gsim$ and
$\approx$ analogously.

\section{Statements of  results}\label{basic-facts}

\subsection{Growth functions and Orlicz spaces}

Let us give a precise definition for the {\sl growth functions}, which are used in the definition of Hardy-Orlicz spaces, see
also  \cite{Viviani}.
\begin{defn} Let $0<p\le 1$. A function $\Phi$ is called a growth function of order $p$ if it satisfies the following properties:
\begin{enumerate}
\item[(G1)] The function $ \Phi$ is  a homeomorphism of $[0,\infty)$ such that $\Phi(0)=0$.
Moreover, the function $\displaystyle t\mapsto \frac{ \Phi(t)}t$
is non-increasing.
\item [(G2)]  The function $\Phi$ is of lower type $p$, that is, there exists a constant
$c>0$ such that, for $s>0$ and $0<t\le 1$,
\begin{equation}\label{cond1}\Phi(st)\le ct^p\Phi(s).\end{equation}
\end{enumerate}
\end{defn}
We will also say that $\Phi$ is a {\sl growth function } whenever
it is a growth function of some order $p<1$. Two growth functions
$\Phi_1$ and $\Phi_2$ define the same Hardy-Orlicz spaces when
$\Phi_1\approx \Phi_2$. In particular, the growth function $\Phi$
of order $p$ is equivalent to the function $\dst\int_0^t
\frac{\Phi(s)}sds$, which is also a growth function of the same order and satisfies
the following additional property.
\begin{enumerate}
\item[(G3)] {\sl The function $ \Phi$ is concave. In particular,
$\Phi$ is sub-additive.}
\end{enumerate}
Our typical example $\Phi_p(t)=\frac{t^p}{\log(e+t)^p}$ satisfies
(G1) and (G2) for $p\leq 1$. The same is valid for the function
$\Phi_{p,\alpha}(t)=t^p(\log(C+t))^{\alpha p}$, provided that $C$
is large enough, for $p<1$ and any $\alpha$, or for $p=1$ and
$\alpha<0$. We modify it as above so that (G3) is also satisfied.
In the sequel we will assume that this modification has been
done, and use as well the notation $\Phi_p$ (or $\Phi_{p,\alpha}$)
 for the modified function.
 \begin{remark}\label{composition}
 When $\Phi$ and $\Psi$ are two growth functions, then
 $\Phi\circ \Psi$ is also a growth function.
 \end{remark}

 Remark also that $\Phi$ is doubling: more precisely,
\begin{equation}\label{doubling}
  \Phi(2t)\le 2\Phi(t),
\end{equation}
a property that will be largely used.

For $(X,d\mu)$ a measure space, we call $L^\Phi$ the corresponding
Orlicz space, that is, the space of functions $f$ such that
$$\|f\|_{L^{\Phi}}:= \int_{X}\Phi{(|f|)}d\mu<\infty.$$
The quantity $\|\cdot\|_{L^{\Phi}}$ is sub-additive, but is not
homogeneous. One may prefer the Luxembourg quasi-norm, which is
homogeneous but not sub-additive. It is defined as
$$\|f\|_{L^{\Phi}}^{\lux}=\inf\left\{\lambda>0:\;
 \int_{X}\Phi\left(\frac{|f(x)|}{\lambda}\right)d\mu(x)\le 1\right\}.$$
 It is easily seen that
 $$\|f\|_{L^{\Phi}}^{\lux}\lsim \min\{\|f\|_{L^{\Phi}},
 \|f\|_{L^{\Phi}}^p\},$$
 while
$$||f||_{L^{\Phi}}\lsim \max\{\|f\|_{L^{\Phi}}^{\lux},
 (\|f\|_{L^{\Phi}}^{\lux})^p\}.$$
 Endowed by the distance
 $||f-g||_{L^{\Phi}}$, $L^\Phi$ is a metric space.
 When $T$ is a linear continuous operator from $L^\Phi$ into the Banach
 space $ \mathcal{B}$, there exists some constant $C$ such that
 $$
\|Tf\|_{ \mathcal{B}} \leq C \|f\|_{L^{\Phi}}^{\lux}\leq C
\|f\|_{L^{\Phi}}.
$$
Conversely, a bounded operator is continuous.
 \subsection{Adapted geometry on the unit ball}
 Let us recall here the different geometric notions (see \cite{Rudin}) that will be necessary for the description of spaces of holomorphic functions.

 For $\zeta\in \bS^n$ and $w\in \overline {\bB^n}$, we note
$$d(\zeta,w):=|1-\langle \zeta,w\rangle|.$$
We recall that, when restricted to $\bS^n\times \bS^n$, this is a quasi-distance. For $\zeta_0\in \bS^n$ and $0<r<1$, we note $B(\zeta_0,r)$  the ball on $ \bS^n$ of center $\zeta_0$ and radius $r$ for the distance $d$. Recall that $\sigma(B(\zeta_0,r))\simeq r^{n}$. In particular,
\begin{equation}\label{doubling-balls}
    \sigma(B(\zeta_0,\lambda r))\simeq \lambda^n  \sigma(B(\zeta_0,\lambda r)),
\end{equation}
with constants that do not depend of $\zeta_0$ and $r$.

For each $\zeta_0\in\bS^n$, we choose an orthonormal basis $v^{(1)}, v^{(2)}, \cdots, v^{(n)} $ in $\bC^n$, such that $v^{(1)}$ is the outward normal vector to the unit sphere. In particular, we can choose the canonical basis for the point with coordinates $(1,0, \cdots, 0)$. Let us call $x_j+iy_j$ the coordinates of $z-\zeta_0$ in this basis. Then $y_1, y_2,\cdots, y_n,  x_2, \cdots, x_n$ can be used as coordinates for $\bS^n$ in a neighborhood of $\zeta_0$, say in the ball $B(\zeta_0, \delta)$. We can take $\delta$ uniformly for all points $\zeta_0$. We will speak of the {\sl special coordinates related to $\zeta_0$}.

Given $\zeta\in\bS^n$ we define the {\em admissible approach
region\/} $\A_\alpha(\zeta)$ as the subset of $\bB^n$ given by
$$
\A_\alpha(\zeta) =\{z=rw\in\bB^n:\,d(\zeta,w)=
|1-\langle\zeta,w\rangle| <\alpha(1- r)\}.
$$
We then define the {\em admissible maximal function} of the holomorphic function $f$ by
$\mathcal M_\alpha(f)$
\begin{equation}\label{f-star-gamma}
\mathcal M_\alpha(f)(\zeta) = \sup_{z\in\A_\alpha(\zeta)} |f(z)|.
\end{equation}

 \subsection{ Hardy-Orlicz spaces}

  Hardy-Orlicz spaces $\H^{\Phi}(\bB^n)$  have been defined in
  \eqref{hardy-orlicz}. We define on $\H^{\Phi}(\bB^n)$ the (quasi)-norms by
$$\Vert f\Vert_{\mathcal \H^{\Phi}(\bB^n)}:= \sup_{0<r<1}\int_{\bS^n}
{\Phi}(|f(rw)|) \,d\sigma(w),$$
$$\|f\|_{\H^{\Phi}(\bB^n)}^{\lux}=\inf\left\{\lambda>0:\;\sup_{0<r<1}
\int_{\bS^n}\Phi\left(\frac{|f(rw)|}{\lambda}\right)d\sigma(w)\le
1\right\},$$ which are finite for $f\in \H^{\Phi}(\bB^n)$ and
define  the same topology.

The assumptions on the growth function $\Phi$ give the inclusions

\begin{equation}\label{inclusion}
\H^{1}(\bB^n)\subset \H^{\Phi}(\bB^n)\subset \H^{p}(\bB^n).
\end{equation}

\medskip

A major property of Hardy spaces is given by the equivalence with
definitions in terms of maximal functions, which generalize in our
setting.

\begin{thm}\label{max-charact} Let $\alpha >0$. There exists a constant $C>0$ so that, for any $f\in\mathcal H^{\Phi}(\bB^n)$,

\begin{equation}\label{aire}\left\Vert \Phi(\mathcal M_\alpha(f))\right\Vert_{L^1(\bS^n)}\le C \Vert f\Vert_{\H^{\Phi}(\bB^n)}\end{equation}

\end{thm}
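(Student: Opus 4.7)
The plan is to reduce the estimate to the classical weak-type $(1,1)$ boundedness of the Hardy--Littlewood maximal operator on $\bS^n$ via a ``small exponent'' trick. Let $p_0$ denote the lower type of $\Phi$ (the order appearing in (G2)), and fix any $p$ with $0<p<p_0$. The key observation is that $\Psi(t):=\Phi(t^{1/p})$ then has lower type $p_0/p>1$ (and upper type $1/p$, since $\Phi$ is concave and so of upper type $1$). By \eqref{inclusion}, $f\in\H^{p_0}(\bB^n)\subset\H^{p}(\bB^n)$; in particular $f$ admits non-tangential boundary values $f^{*}\in L^{p}(\bS^n)$.

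The first step is the classical pointwise domination of the admissible maximal function by a power of the spherical Hardy--Littlewood maximal operator $M$ (with respect to the Kor\'anyi balls):
$$\mathcal M_\alpha(f)(\zeta)\le C\bigl(M(|f^{*}|^{p})(\zeta)\bigr)^{1/p}. \qquad (\ast)$$
To establish $(\ast)$, I would fix $z\in\A_\alpha(\zeta)$ with $|z|=r$; for $c=c(\alpha)>0$ small enough, $B(z,c(1-r))\subset\bB^n$, subharmonicity of $|f|^{p}$ yields the mean-value estimate on this Euclidean ball, and writing the integral in polar coordinates shows that $|f(z)|^{p}$ is dominated by an average of $|f_{\rho}|^{p}$ over a Kor\'anyi ball on $\rho\bS^n$ of radius comparable to $1-r$ and centered near $\zeta$. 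Letting $\rho\uparrow 1$ and invoking Fatou on the sphere, then taking the sup over $z\in\A_\alpha(\zeta)$, delivers $(\ast)$.

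The second step is the Orlicz modular inequality
$$\int_{\bS^n}\Psi(Mg)\,d\sigma \;\le\; C\int_{\bS^n}\Psi(g)\,d\sigma \qquad(g\ge 0),$$
which holds precisely because $\Psi$ has lower type strictly greater than $1$. This follows from the usual distribution-function argument: split $g$ at level $\lambda/2$, apply the weak-$(1,1)$ estimate $\sigma\{Mg>\lambda\}\le C\lambda^{-1}\int_{\{|g|>\lambda/2\}}|g|\,d\sigma$, integrate against $d\Psi(\lambda)$, and apply Fubini together with the estimate $\int_0^{u}\lambda^{-1}\,d\Psi(\lambda)\lsim \Psi(u)/u$ that follows from the lower type $p_0/p>1$ of $\Psi$.

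Combining, I apply the modular inequality to $g:=|f^{*}|^{p}$, use $(\ast)$ together with the doubling property \eqref{doubling} of $\Phi$ to absorb the constant in $(\ast)$, and observe that $\Psi(t^{p})=\Phi(t)$:
$$\int_{\bS^n}\Phi(\mathcal M_\alpha(f))\,d\sigma \;\lsim\; \int_{\bS^n}\Psi\bigl(M(|f^{*}|^{p})\bigr)\,d\sigma \;\lsim\; \int_{\bS^n}\Psi(|f^{*}|^{p})\,d\sigma \;=\; \int_{\bS^n}\Phi(|f^{*}|)\,d\sigma \;\lsim\; \|f\|_{\H^\Phi(\bB^n)}.$$
The main technical obstacle is the pointwise bound $(\ast)$, where one must carefully match Euclidean balls inside $\bB^n$ with Kor\'anyi balls on the boundary slices in a way that is uniform in the aperture $\alpha$; the Orlicz modular inequality for $M$, while requiring some attention in setting up the distribution function, is routine once $p$ has been chosen strictly below the lower type of $\Phi$.
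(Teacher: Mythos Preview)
Your strategy coincides with the paper's: choose an exponent $\beta$ below the lower type of $\Phi$, establish the Orlicz modular inequality for the Hardy--Littlewood maximal operator from the weak-$(1,1)$ bound (your Step~2 is exactly the paper's Lemma~\ref{HL}), and feed a pointwise domination of $\mathcal M_\alpha(f)$ into it. The gap is in your justification of $(\ast)$. The sub-mean-value property on a \emph{Euclidean} ball $B(z,c(1-r))$ yields, after slicing, averages over \emph{Euclidean} caps on $\rho\bS^n$, not Kor\'anyi balls; and for $z=rw\in\A_\alpha(\zeta)$ the base point $w$ sits at Euclidean distance $\simeq\sqrt{1-r}$ from $\zeta$ (while at Kor\'anyi distance $\lsim 1-r$), so these caps are not centered near $\zeta$ in any sense that lets you pass to $M(|f^*|^p)(\zeta)$. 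Enlarging the Euclidean cap to a Kor\'anyi ball about $\zeta$ inflates the measure by a factor $\simeq(1-r)^{-(n-1)}$, which destroys the bound. The phrase ``letting $\rho\uparrow1$'' is also off: the slices live in the range $|\rho-r|<c(1-r)$, not near the boundary.

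The paper handles this by splitting into two steps: first bound the \emph{radial} maximal $\mathcal M_0(f)$ via the ordinary Poisson integral and the \emph{Euclidean} Hardy--Littlewood maximal function on $\bS^n$ (where the geometries match), then invoke the known inequality $\mathcal M_\alpha(f)^\beta\lsim \mathcal M^{HL}\bigl(\mathcal M_0(f)^\beta\bigr)$ for the Kor\'anyi $\mathcal M^{HL}$ and apply Lemma~\ref{HL} once more. Alternatively, your one-step $(\ast)$ \emph{is} true, but the correct argument replaces Euclidean subharmonicity by plurisubharmonicity: $|f|^p$ is dominated by the Poisson--Szeg\"o extension of $|f^*|^p$, and the admissible maximal of a Poisson--Szeg\"o integral is controlled by the Kor\'anyi Hardy--Littlewood maximal of its boundary datum. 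Either route closes the gap.
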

So the two quantities are equivalent.

Next we  define the real Hardy-Orlicz spaces on the unit sphere in
terms of {\sl atoms}.

For $\zeta_0\in\bS^n$, we consider an orthonormal basis $v^{(1)}, v^{(2)}, \cdots, v^{(n)} $ in $\bC^n$ giving rise to special coordinates related to $\zeta_0$. Recall that, if the coordinates for the basis $v^{(j)}$ are denoted by $w_j:=x_j+iy_j$, then $x_j$, for $j\geq 2$ and $y_j$ for $j\geq 1$ define local coordinates of $\bS^n$ inside the ball $B(\zeta_0, \delta)$. We call $\mathcal P_N(\zeta_0)$ the set of functions on $B(\zeta_0, \delta)$ which are polynomials of degree $\leq N$ in these $2n-1$ real coordinates. Remark that $\mathcal P_N(\zeta_0)$ does not depend on the choice of $v^{(2)}, \cdots, v^{(n)} $.

\begin{defn}\label{atom}
A square integrable function $a$ on $\bS^n$ is called an
atom of order $N\in\bN$  associated to the ball
$B:=B(\zeta_0,r_0)$,  for some $\zeta_0\in\bS^n$, if
the following properties are satisfied:
\begin{enumerate}
\item [{\rm(A1)}] $\text{supp\,}a\ss B $;
\item [{\rm(A2)}] $\int_{\bS^n} a(\zeta)P(\zeta)d\sigma(\zeta)=0$ for every $P\in \mathcal P_N(\zeta_0)$ when $r_0<\delta$.
\end{enumerate}
\end{defn}
The second condition is also called {\sl the moment condition}. It is only required for small balls.

We can now define the real Hardy-Orlicz spaces. Recall that the
term ``real" is related with the fact that the definition makes
sense for real functions, and does not require any assumption of
holomorphy. Here we consider complex valued functions, since in particular
we are interested in the fact that these spaces contain boundary
values (in the distribution sense) of holomorphic functions of
$\H^{\Phi}(\bB^n)$.
\begin{defn}\label{realHardy-Orlicz}
The real Hardy-Orlicz space $H^{\Phi}(\bS^n)$ is the space of
distributions $f$ on $\bS^n$ which can be written as the limit, in
the distribution sense, of  series
\begin{equation}\label{norm-convergence}
f=\sum_{j} a_j, \hspace{1.5cm}\sum_j \sigma(B_j){\Phi}(\Vert
a_j\Vert_2\sigma(B_j)^{-\frac 12})<\infty,
\end{equation}
where the $a_j$'s are atoms of order $N$,
associated to the balls $B_j$. Here $N$ is an integer chosen so that $N> N_p:=2n(\frac 1p-1)-1$.
\end{defn}
The (quasi) norm on $H^{\Phi}(\bS^n)$ is defined by
\begin{equation}\label{norm-atom}\|f\|_{H^{\Phi}}=\inf\left\{\sum_j \sigma(B_j){\Phi}(\Vert
a_j\Vert_2\sigma(B_j)^{-\frac 12}):\, f=\sum_j  a_j \right\}.\end{equation} It is also
sub-additive. In particular, with the distance between $f$ and $g$
given by $ \|f-g\|_{H^{\Phi}}$, $H^{\Phi}(\bS^n)$ is a complete
metric space. It is easily seen that the series in
(\ref{norm-convergence}) converges in {\em metric}.
Remark that convergence in $H^\Phi(\bS^n)$ implies convergence in
the sense of distribution.

We will see that the Szeg\"{o} kernel projects onto the space
$\H^{\Phi}(\bB^n)$.

\begin{remark} The condition on $N$ guarantees  that the Szeg\"{o} projection of the atom $a$ (or its maximal function) is well defined and has $L^\Phi$ norm uniformly bounded in terms of $\Phi(\|a\Vert_2\sigma(B)^{-\frac 12})\sigma (B)$. It follows from the theorems below that the space $H^{\Phi}(\bS^n)$ does not depend on $N> N_p$.
\end{remark}

Moreover, we have the following atomic
decomposition.

\begin{thm}\label{atomic-decom}
Let $N\in\bN$ be larger than $N_p$.  Given any
$f\in\H^{\Phi}(\bB^n)$ there exist atoms $a_j$ of order $N$ such
that $\sum_{j=0}^{\infty}a_j\in H^{\Phi}(\bS^n)$ and
$$
f=P_S\biggl(\sum_{j=0}^{\infty} a_j\biggr) =
\sum_{j=0}^{\infty}P_S(a_j).
$$
Moreover
$$
\sum_{j=0}^{\infty}\sigma(B_j){\Phi}(\Vert
a_j\Vert_2\sigma(B_j)^{-\frac 12})\approx\|f\|_{\H^{\Phi}(\bB^n)}.
$$
\end{thm}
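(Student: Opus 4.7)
The plan is to combine the maximal function characterization of Theorem \ref{max-charact} with a Calder\'on--Zygmund--Whitney decomposition at dyadic levels of the admissible maximal function. For each $k\in\bZ$, let $\Omega_k=\{\zeta\in\bS^n:\mathcal M_\alpha(f)(\zeta)>2^k\}$. These are open subsets of $\bS^n$ of finite measure, and by Theorem \ref{max-charact} together with the doubling of $\Phi$ and a layer-cake identity,
$$
\sum_{k\in\bZ}\Phi(2^k)\sigma(\Omega_k)\approx \int_{\bS^n}\Phi(\mathcal M_\alpha f)\,d\sigma \lsim \|f\|_{\H^\Phi(\bB^n)}.
$$
This is the \emph{a priori} bound that the atomic decomposition must respect.

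For each $k$ I would pick a Whitney-type covering of $\Omega_k$ by Koranyi balls $\{B_i^k\}$ of bounded overlap, with $cB_i^k\subset\Omega_k$ and with radius comparable to the distance from $B_i^k$ to $\Omega_k^c$, together with a subordinate smooth partition of unity $\{\phi_i^k\}$. In the special coordinates at the center $\zeta_i^k$ of $B_i^k$, let $c_i^k\in\P_N$ be the orthogonal projection of $f$ onto $\P_N$ in the $L^2(\phi_i^k\,d\sigma)$ inner product, so that $b_i^k:=(f-c_i^k)\phi_i^k$ is orthogonal to $\P_N$ against $d\sigma$; set $g^k=f\chi_{\Omega_k^c}+\sum_i c_i^k\phi_i^k$. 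A standard argument, using that on $\Omega_k^c$ we have $\mathcal M_\alpha f\le 2^k$ and that the coefficients of $c_i^k$ are controlled by boundary values of $f$ along admissible approach regions reaching $\Omega_k^c$, yields $\|g^k\|_\infty\lsim 2^k$. Telescoping gives, in the sense of distributions on $\bS^n$,
$$
f=\sum_{k\in\bZ}(g^{k+1}-g^k),
$$
and the usual reshuffling of the partitions at levels $k$ and $k+1$ splits each increment into a sum of atoms $a_{i,j}^k$ of order $N$, supported in $B_i^k$, satisfying the uniform bound $|a_{i,j}^k|\lsim 2^k$, hence $\|a_{i,j}^k\|_2\,\sigma(B_i^k)^{-1/2}\lsim 2^k$. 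Since $\Phi$ is non-decreasing, the atomic norm is controlled by
$$
\sum_{k,i,j}\sigma(B_{i,j}^k)\Phi(\|a_{i,j}^k\|_2\,\sigma(B_{i,j}^k)^{-1/2})\lsim \sum_k\sigma(\Omega_k)\Phi(2^k)\lsim \|f\|_{\H^\Phi(\bB^n)}.
$$
The identity $f=P_S(\sum a_{i,j}^k)$ follows because the partial sums converge to the boundary values of $f$ in $H^\Phi(\bS^n)$, hence in the sense of distributions, while $P_S$ is continuous on that space via the molecular estimates for $P_S a$ that must be established in parallel.

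The main obstacle will be the moment conditions when $p$ is small: $N_p$ is then large, we must subtract polynomials of high degree, and it has to be checked that the $L^2(\phi_i^k\,d\sigma)$-projection onto $\P_N$ has coefficients controlled by $2^k$, so that $g^k$ is genuinely bounded by $2^k$ and the ensuing atoms inherit the $L^\infty$ bound. This forces a careful inspection of how higher-order Taylor-like data of $f$ on a Whitney ball are dominated by the admissible maximal function --- the replacement, at large $N_p$, of the single mean-zero condition that suffices in Viviani's range $p>2n/(2n+1)$. A related minor point is that atoms associated to balls of radius $\ge\delta$ need not satisfy any moment condition, which matches the restriction $r_0<\delta$ built into Definition \ref{atom}, and the reverse inequality $\sum_{k,i,j}\sigma(B_{i,j}^k)\Phi(\|a_{i,j}^k\|_2\,\sigma(B_{i,j}^k)^{-1/2})\gsim\|f\|_{\H^\Phi(\bB^n)}$ will come from the later molecular estimates.
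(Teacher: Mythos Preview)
Your approach is essentially the one the paper takes: a Calder\'on--Zygmund decomposition at dyadic levels of a maximal function, Whitney coverings of the superlevel sets, telescoping into atoms with $L^\infty$ bounds $\lsim 2^k$, and then the layer-cake estimate $\sum_k \Phi(2^k)\sigma(\Omega_k)\lsim\|f\|_{\H^\Phi}$. The paper in fact cites \cite{GL} and \cite{GP} for the construction you sketch, and only writes out the final norm estimate. Two small differences are worth noting. First, the paper defines the level sets using $K_{\alpha,M}f+\mathcal M_\alpha f$ rather than $\mathcal M_\alpha f$ alone; this is exactly the device that resolves what you flag as the ``main obstacle'', since the coefficients of your polynomial $c_i^k$ are integrals of the boundary distribution against smooth bumps supported in $B_i^k$, hence are controlled by the grand maximal function $K_{\alpha,M}f$ at a nearby point of $\Omega_k^c$, giving the bound $\lsim 2^k$ directly and for arbitrary $N$. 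Second, rather than summing over all $k\in\bZ$, the paper fixes a threshold $k_0$ with $\|\Phi(K_{\alpha,M}f+\mathcal M_\alpha f)\|_{L^1}\le 2^{k_0}$, writes $f=h_0+\sum_{k\ge 0}\sum_i b_i^k$ with a single bounded ``junk'' function $h_0$ (with $\|h_0\|_\infty\lsim 2^{k_0}$), and sums only over $k\ge 0$; this avoids any issue with the tail as $k\to-\infty$.
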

As in the atomic decomposition of  Hardy spaces of
$\bR^n$, the order of moment conditions of the atoms can be chosen
arbitrarily large. Having optimal values has no importance later
on, which allows to adapt easily proofs to a class of domains including convex domains of
finite type and strictly pseudo-convex domains, for which the optimal values of $N_p$ are different. The fact that atoms may satisfy
moment conditions up to an arbitrary large order will play a crucial role
for the factorization.

Szeg\"{o} projections of atoms are best described in terms of
molecules, which we introduce now.
\begin{defn}\label{molecule}
A holomorphic function $A\in \H^2(\bB^n)$ is called a molecule of
order $L$, associated to the ball $B:=B(z_0,r_0)\subset \bS^n$, if
it satisfies
\begin{equation}\label{mol}
\Vert A\Vert_{{\rm
mol}(B,L)}:=\left(\sup_{r<1}\int_{\bS^n}\left(1\,+\,\frac{d(z_0,\xi)^{L+n}}{r_0^{L+n}}\right)|A(r\xi)|^2
\frac{d\sigma(\xi)}{\sigma(B)}\right)^{1/2}<\infty.
\end{equation}
\end{defn}
\begin{prop}\label{molecule-property}
For an atom $a$ of order $N$ associated to  the ball
$B\subset \bS^n$, its Szeg\"o projection
 $P_S(a)$ is a molecule associated to $B$ of any order $L<N+1$.
It satisfies
$$\Vert A\Vert_{{\rm
mol}(B,L)}\lsim \Vert a\Vert_2\sigma(B)^{-\frac 12}.$$
\end{prop}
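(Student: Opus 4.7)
The plan is to split the integral in the definition \eqref{mol} of the molecule norm into a near zone $\{d(z_0,\xi)\le 2r_0\}$ and a far zone $\{d(z_0,\xi)>2r_0\}$; the supremum over $r<1$ is handled by standard subharmonicity and maximal-function arguments applied to $A=P_Sa\in\H^2(\bB^n)$, so the task reduces to controlling
\[
\frac{1}{\sigma(B)}\int_{\bS^n}\Bigl(1+\frac{d(z_0,\xi)^{L+n}}{r_0^{L+n}}\Bigr)|A(\xi)|^2\,d\sigma(\xi)
\]
by $\|a\|_2^2/\sigma(B)$, working with boundary values on $\bS^n$.

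On the near zone the weight is bounded by a constant depending only on $L$, so the $L^2$-boundedness of the Szeg\"o projection ($\|A\|_2\le\|a\|_2$) suffices. This in fact handles the entire integral whenever $r_0\ge\delta$, since then $d(z_0,\xi)\lesssim 1\approx r_0$. One may therefore assume $r_0<\delta$, making the moment condition (A2) available on the far zone: writing
\[
A(\xi)=\int_B\bigl[S(\xi,\zeta)-T_N^{\zeta_0}S(\xi,\cdot)(\zeta)\bigr]\,a(\zeta)\,d\sigma(\zeta),
\]
where $T_N^{\zeta_0}$ is the Taylor polynomial of degree $N$ in the special coordinates $(y_1,x_2,y_2,\dots,x_n,y_n)$ about $\zeta_0=z_0$, everything reduces to a pointwise bound for the remainder.

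The crucial input is the anisotropic derivative estimate
\[
|D^\alpha_\zeta S(\xi,\zeta)|\lesssim d(\xi,\zeta)^{-n-\alpha_1-\alpha_t/2},
\]
with $\alpha_1$ the number of normal derivatives $\partial_{y_1}$ and $\alpha_t=|\alpha|-\alpha_1$ the number of tangential derivatives $\partial_{x_j},\partial_{y_j}$ ($j\ge2$). This is proved by direct differentiation of $S(\xi,\zeta)=c_n(1-\langle\xi,\zeta\rangle)^{-n}$, using $|\xi_1|\lesssim 1$ and $|\xi_j|\lesssim d(\xi,\zeta_0)^{1/2}$ for $j\ge2$ (a consequence of $|1-\xi_1|\gtrsim |\xi'|^2$ on $\bS^n$), together with the sizes of the derivatives of $x_1=\sqrt{1-y_1^2-|\zeta'|^2}$ with respect to the free coordinates. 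Combined with $|y_1|\lesssim r_0$, $|x_j|,|y_j|\lesssim r_0^{1/2}$ on $B$ and with $d(\xi,\zeta)\approx d(\xi,\zeta_0)=:d$ throughout the far zone, the worst (purely tangential) Taylor term dominates, yielding
\[
|S(\xi,\zeta)-T_N^{\zeta_0}S(\xi,\cdot)(\zeta)|\lesssim d^{-n}(r_0/d)^{(N+1)/2}.
\]

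Plugging this in and using $\|a\|_{L^1}\le\|a\|_2\sigma(B)^{1/2}$ gives $|A(\xi)|\lesssim \|a\|_2\,r_0^{n/2}\,d^{-n}(r_0/d)^{(N+1)/2}$. The far-zone contribution to the molecule norm then reduces, after splitting into dyadic annuli $d\sim 2^kr_0$ of measure $\sim(2^kr_0)^n$, to $\|a\|_2^2/\sigma(B)$ times the geometric series $\sum_{k\ge 1}2^{k(L-N-1)}$, which converges precisely when $L<N+1$. The main technical obstacle is the rigorous derivation of the anisotropic derivative estimate, which requires careful bookkeeping of the implicit dependence of $x_1$ on the free sphere coordinates when iterating derivatives; once this is in hand, the rest is a standard annular decomposition.
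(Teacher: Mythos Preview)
Your proof is correct and follows essentially the same route as the paper: split the molecule integral into the near zone $d(\zeta_0,\xi)\le 2r_0$ (handled by the $L^2$-boundedness of $P_S$) and the far zone, where the moment condition and the anisotropic derivative bound \eqref{estimateSzego} give the pointwise decay \eqref{far}, after which a dyadic-shell summation (equivalently the estimate \eqref{rest}) converges exactly when $L<N+1$. The paper obtains the far-zone bound uniformly in $r<1$ directly from the kernel estimate rather than passing to boundary values first, but the content is the same.
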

\begin{prop}\label{at-gives-mol}
Any molecule $A$ of order $L$ so that
$L>L_p:=2n(1/p-1)$ belongs to $\mathcal H^\Phi(\bB^n)$ with
$$\Vert A\Vert_{\mathcal H^\Phi}\lsim \Phi(\Vert A\Vert_{{\rm
mol}(B,L)})\sigma(B).$$
\end{prop}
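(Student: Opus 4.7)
My plan is to estimate directly the integrals $\int_{\bS^n}\Phi(|A(r\xi)|)\,d\sigma(\xi)$ uniformly in $r\in(0,1)$, which gives the desired norm bound in $\H^\Phi(\bB^n)$ without invoking the maximal characterization of Theorem~\ref{max-charact}. Set $M:=\|A\|_{{\rm mol}(B,L)}$, write $f_r(\xi):=A(r\xi)$, and decompose $\bS^n=B\cup\bigsqcup_{k\ge 0}E_k$ with $E_k:=B(z_0,2^{k+1}r_0)\setminus B(z_0,2^k r_0)$; by \eqref{doubling-balls}, $\sigma(E_k)\approx 2^{kn}\sigma(B)$. Because $(d(z_0,\cdot)/r_0)^{L+n}\approx 2^{k(L+n)}$ on $E_k$, Definition~\ref{molecule} yields, uniformly in $r$,
\begin{equation*}
\int_{E_k}|f_r|^2\,d\sigma\ \lsim\ M^2\sigma(B)\,2^{-k(L+n)},\qquad \int_B|f_r|^2\,d\sigma\ \le\ M^2\sigma(B).
\end{equation*}
Cauchy--Schwarz then gives $\int_{E_k}|f_r|\,d\sigma\lsim M\sigma(B)\,2^{-kL/2}$, so the average of $|f_r|$ on $E_k$ is $\lsim M\,2^{-k(L/2+n)}$, and its average on $B$ is $\le M$.

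The key step is then to exploit concavity of $\Phi$ (property (G3)) through Jensen's inequality,
\begin{equation*}
\int_{E_k}\Phi(|f_r|)\,d\sigma\ \le\ \sigma(E_k)\,\Phi\!\left(\frac{1}{\sigma(E_k)}\int_{E_k}|f_r|\,d\sigma\right),
\end{equation*}
followed by the lower type estimate (G2), which (after absorbing the implicit constant via the doubling property \eqref{doubling}) gives $\Phi(CM\cdot 2^{-k(L/2+n)})\lsim 2^{-kp(L/2+n)}\Phi(M)$. Combining these ingredients yields
\begin{equation*}
\int_{E_k}\Phi(|f_r|)\,d\sigma\ \lsim\ \Phi(M)\sigma(B)\cdot 2^{k\bigl(n-p(L/2+n)\bigr)},
\end{equation*}
while the corresponding estimate on $B$ (with no decay needed) gives $\int_B\Phi(|f_r|)\,d\sigma\lsim\Phi(M)\sigma(B)$.

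Summation over $k\ge 0$ requires the exponent $n-p(L/2+n)$ to be strictly negative, which is equivalent to the assumption $L>2n(1/p-1)=L_p$. Taking the supremum over $r<1$ of the resulting bound then produces $\|A\|_{\H^\Phi(\bB^n)}\lsim\Phi(M)\sigma(B)$, as required. The only delicate point in the argument is the arithmetic matching of three competing powers of $2^k$---the molecular decay $2^{-k(L+n)/2}$ of the $L^2$-norm of $f_r$, the growth $2^{kn/2}$ coming from $\sigma(E_k)^{1/2}$ in Cauchy--Schwarz, and the power $p$ extracted by the lower type condition---whose combination produces the threshold $L_p$ precisely. No other serious obstacles arise: all the estimates are driven by the single molecular inequality of Definition~\ref{molecule} together with the structural properties (G1)--(G3) of the growth function.
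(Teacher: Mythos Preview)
Your proof is correct and follows essentially the same route as the paper's own argument: both decompose $\bS^n$ into dyadic annuli around $z_0$, use the molecular $L^2$ control on each annulus, apply Jensen's inequality via the concavity of $\Phi$ (the paper phrases this as \eqref{jensen}, passing directly to the $L^2$ average, which is your Cauchy--Schwarz step absorbed into Jensen), and then invoke the lower type $p$ to extract geometric decay, with summability forcing exactly the threshold $L>L_p=2n(1/p-1)$. The only cosmetic difference is that the paper states the key shell estimate abstractly for a generic positive function $g$, whereas you work directly with $|A(r\cdot)|$; the arithmetic and the condition on $L$ match perfectly.
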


The atomic decomposition and  the previous propositions have as
corollaries the
 molecular decomposition of functions in $\mathcal H^\Phi(\bB^n)$, the continuity
 of the Szeg\"o projection, and the identification of the dual
 space. Let us begin with molecular decomposition.
\begin{thm}\label{mol-dec}
For any $f\in\H^{\Phi}(\bB^n)$, there exists molecules $A_j$ of
order $L$, $L>L_p$, associated to the balls $B_j$, so that $f$
may be written as
$$f=\sum_{j} A_j$$
with $\Vert f\Vert_{\H^{\Phi}(\bB^n)}\approx \sum_j  \Phi(\Vert
A_j\Vert_{{\rm mol}(B_j,L)})\sigma(B_j).$
\end{thm}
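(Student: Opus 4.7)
\smallskip
\noindent\textbf{Proof plan for Theorem \ref{mol-dec}.} The strategy is to build the molecular decomposition directly from the atomic decomposition of Theorem \ref{atomic-decom} via the Szeg\"o projection, using Propositions \ref{molecule-property} and \ref{at-gives-mol} as the two half-bridges between the atomic and the molecular worlds. Given $L>L_p=2n(\frac1p-1)$, I first fix an integer $N$ with $N+1>L$ and $N>N_p$, which is possible since $L_p=N_p+1$. With this $N$, I apply Theorem \ref{atomic-decom} to write
\[
f \;=\; \sum_{j} P_S(a_j),
\qquad
\sum_{j} \sigma(B_j)\,\Phi\!\bigl(\|a_j\|_2\,\sigma(B_j)^{-1/2}\bigr)\;\approx\;\|f\|_{\H^{\Phi}(\bB^n)},
\]
where each $a_j$ is an atom of order $N$ associated to the ball $B_j$.

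\smallskip
\noindent\emph{Molecules and the first inequality.} I set $A_j:=P_S(a_j)$. By Proposition \ref{molecule-property}, each $A_j$ is a molecule of order $L$ associated to $B_j$ (since $L<N+1$), with
\[
\|A_j\|_{\mathrm{mol}(B_j,L)} \;\lsim\; \|a_j\|_2\,\sigma(B_j)^{-1/2}.
\]
Since $\Phi$ is non-decreasing and of lower type $p$ (hence $\Phi(Ct)\lsim \Phi(t)$ for any fixed constant $C\geq 1$, using doubling), I get
\[
\sum_{j}\Phi\!\bigl(\|A_j\|_{\mathrm{mol}(B_j,L)}\bigr)\,\sigma(B_j)
\;\lsim\;
\sum_{j}\Phi\!\bigl(\|a_j\|_2\,\sigma(B_j)^{-1/2}\bigr)\,\sigma(B_j)
\;\approx\;\|f\|_{\H^{\Phi}(\bB^n)}.
\]
This already shows that the molecules $A_j$ realize the claimed decomposition with a controlled quantity $\sum \Phi(\|A_j\|_{\mathrm{mol}(B_j,L)})\sigma(B_j)$ no larger, up to a constant, than $\|f\|_{\H^{\Phi}(\bB^n)}$.

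\smallskip
\noindent\emph{The reverse inequality.} For the other direction I combine Proposition \ref{at-gives-mol} with the sub-additivity of the functional $\|\cdot\|_{\H^{\Phi}(\bB^n)}$ (which follows from $\Phi$ being concave with $\Phi(0)=0$, hence sub-additive). Indeed, Proposition \ref{at-gives-mol} gives, for each $j$,
\[
\|A_j\|_{\H^{\Phi}(\bB^n)} \;\lsim\; \Phi\!\bigl(\|A_j\|_{\mathrm{mol}(B_j,L)}\bigr)\,\sigma(B_j).
\]
The partial sums $S_M:=\sum_{j\leq M} A_j$ therefore satisfy $\|f-S_M\|_{\H^{\Phi}(\bB^n)} \leq \sum_{j>M}\|A_j\|_{\H^{\Phi}(\bB^n)}\to 0$, so the series $\sum_j A_j$ converges to $f$ in $\H^{\Phi}(\bB^n)$ (and not just in the distribution sense inherited from the atomic decomposition). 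Passing to the limit and applying sub-additivity one more time,
\[
\|f\|_{\H^{\Phi}(\bB^n)}
\;\leq\;
\sum_{j}\|A_j\|_{\H^{\Phi}(\bB^n)}
\;\lsim\;
\sum_{j}\Phi\!\bigl(\|A_j\|_{\mathrm{mol}(B_j,L)}\bigr)\,\sigma(B_j),
\]
which combined with the first inequality yields the equivalence $\approx$.

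\smallskip
\noindent\emph{Where the work really lies.} The theorem is essentially a packaging result: the actual analytic labor has already been spent in Theorem \ref{atomic-decom} and in Propositions \ref{molecule-property}, \ref{at-gives-mol}. The only genuine point to keep track of is the juggling of the parameters: one must verify that the interval $(L_p,N+1)$ is non-empty for some admissible $N$, so that the molecules produced from atoms have an order that also falls in the range making Proposition \ref{at-gives-mol} applicable. Once this bookkeeping is done, sub-additivity of $\Phi$ takes care of the rest, and the only mild subtlety is upgrading the distributional convergence of the atomic series to convergence in the metric of $\H^{\Phi}(\bB^n)$, which is immediate from the summability of the $\H^{\Phi}$-norms of the molecules just obtained.
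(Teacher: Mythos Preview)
Your proposal is correct and follows exactly the approach the paper intends: the paper presents Theorem \ref{mol-dec} as a direct corollary of Theorem \ref{atomic-decom} together with Propositions \ref{molecule-property} and \ref{at-gives-mol}, and you have spelled out precisely that argument, including the parameter compatibility $L_p=N_p+1$ needed to pick $N$ with $N_p<N$ and $L<N+1$. Your remark on upgrading distributional convergence to $\H^{\Phi}$-metric convergence via sub-additivity is a clean way to close the loop that the paper leaves implicit.
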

The continuity of the Szeg\"o projection is also a direct
consequence of the atomic decomposition and the fact that an atom
is projected into a molecule.
\begin{thm}\label{boundedness}
The Szeg\"o projection extends into a continuous operator,
$$
P_S: H^{\Phi}(\bS^n)\rightarrow\H^{\Phi}(\bB^n).
$$
\end{thm}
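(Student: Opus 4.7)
The plan is to reduce everything to the already-stated estimates for atoms and molecules (Propositions \ref{molecule-property} and \ref{at-gives-mol}) together with the sub-additivity of the quasi-norm $\|\cdot\|_{\H^\Phi}$. Fix $f\in H^\Phi(\bS^n)$ and an atomic decomposition $f=\sum_j a_j$, with atoms $a_j$ of order $N>N_p=2n(1/p-1)-1$, associated to the balls $B_j$, such that
$$\sum_j \sigma(B_j)\Phi\bigl(\|a_j\|_2\sigma(B_j)^{-1/2}\bigr)\le 2\|f\|_{H^\Phi}.$$
Since $N+1>N_p+1=L_p$, I can pick $L$ with $L_p<L<N+1$. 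Then Proposition \ref{molecule-property} tells me that $P_S(a_j)$ is a molecule of order $L$ associated to $B_j$ with $\|P_S(a_j)\|_{\mathrm{mol}(B_j,L)}\lsim \|a_j\|_2\sigma(B_j)^{-1/2}$, and Proposition \ref{at-gives-mol} converts this into the estimate
$$\|P_S(a_j)\|_{\H^\Phi(\bB^n)}\lsim \Phi\bigl(\|P_S(a_j)\|_{\mathrm{mol}(B_j,L)}\bigr)\sigma(B_j)\lsim \Phi\bigl(\|a_j\|_2\sigma(B_j)^{-1/2}\bigr)\sigma(B_j),$$
where I used that $\Phi$ is non-decreasing.

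Next I would sum these estimates. Because $\Phi$ is concave and $\Phi(0)=0$, the functional $\|\cdot\|_{\H^\Phi(\bB^n)}$ is sub-additive (as already noted for $\|\cdot\|_{L^\Phi}$), so for the partial sums $g_M:=\sum_{j\le M}P_S(a_j)$ one gets $\|g_M-g_{M'}\|_{\H^\Phi}\lsim \sum_{j> \min(M,M')}\Phi(\|a_j\|_2\sigma(B_j)^{-1/2})\sigma(B_j)$. Thus $(g_M)$ is Cauchy in the complete metric space $\H^\Phi(\bB^n)$ and converges to some $g\in\H^\Phi(\bB^n)$ with
$$\|g\|_{\H^\Phi(\bB^n)}\lsim \sum_j\Phi\bigl(\|a_j\|_2\sigma(B_j)^{-1/2}\bigr)\sigma(B_j)\lsim \|f\|_{H^\Phi(\bS^n)}.$$
Taking the infimum over all admissible atomic decompositions of $f$ yields the desired continuity estimate.

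It remains to check that $g$ coincides with the Szeg\"o projection of $f$, so that the assignment $f\mapsto g$ is really the (unique) continuous extension of $P_S$. This is the only non-algebraic point, and I would handle it by a pointwise argument in $\bB^n$: for every $z\in\bB^n$ the Szeg\"o kernel $\xi\mapsto K(z,\xi)$ is smooth on $\bS^n$, so the distributional convergence $\sum_{j\le M} a_j\to f$ forces $P_S(\sum_{j\le M} a_j)(z)=\sum_{j\le M} P_S(a_j)(z)\to P_S(f)(z)$. On the other hand, by Theorem \ref{max-charact}, convergence in $\H^\Phi(\bB^n)$ implies convergence of the admissible maximal functions, hence pointwise convergence inside $\bB^n$, so $g_M(z)\to g(z)$. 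The two limits agree, whence $g=P_S(f)$. The main point to keep an eye on is thus not a genuine obstacle but simply the compatibility check between the two modes of convergence; the quantitative heart of the argument is the single inequality coming from combining Propositions \ref{molecule-property} and \ref{at-gives-mol}.
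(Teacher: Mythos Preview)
Your argument is correct and follows exactly the route the paper indicates: the paper states that ``the continuity of the Szeg\"o projection is also a direct consequence of the atomic decomposition and the fact that an atom is projected into a molecule,'' i.e., it combines Propositions~\ref{molecule-property} and~\ref{at-gives-mol} with the sub-additivity of the quasi-norm, which is precisely what you do. Your additional care in verifying that the limit in $\H^\Phi(\bB^n)$ agrees pointwise with $P_S(f)$ is a detail the paper leaves implicit.
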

Before giving the duality statement, let us first define the
generalized $\BMO(\varrho)$-spaces as follows. We assume that $\varrho$ is a continuous increasing function from $[0,1]$ onto  $[0,1]$, which is of upper type $\alpha$, that is,
\begin{equation}\label{upper}
    \varrho(st)\leq s^\alpha \varrho(t)
\end{equation}
 for $s>1$, with $st\leq 1$. We then define
$$\BMO(\varrho)=\left\{f\in L^2(\bS^n)\;;\;\sup_B \inf_{P\in \mathcal P_N(B)}\frac 1{\varrho(\sigma(B))\sigma(B)}\int_B|f-P|^2d\sigma<\infty \right\}.$$
Here, for $B$ a ball of center $\zeta_B$, assumed to be of radius $r<\delta$, we note $\mathcal P_N(B):=\mathcal P_N(\zeta_B)$.  The integer $N$ is taken  large enough, say $N>2n\alpha-1$. Before going on, let us make some remarks.
\begin{remark} Instead of the infimum on $P\in \mathcal P_N(B)$, we can take the function $E_N f$, with $E_N$ the orthogonal projection (in $L^2(B)$) onto $\mathcal P_N(B)$.
\end{remark}
\begin{remark} The definition does not depend on $N>2n\alpha -1$. We will not prove this and refer to \cite{BPS2} for a proof for $\alpha<1/2$. It is a consequence of duality and atomic decomposition. 
\end{remark}

\begin{remark} One may prove that, as in the Euclidean case (see \cite{Janson}) when $\varrho$ is of upper type less than $1/2n$ and satisfies the Dini condition
$$\int_r^1 \frac{\varrho(s)}{s^2} ds \lsim \varrho(r),$$
then $\BMO(\varrho)$ coincides with the Lipschitz space $\Lambda(\varrho)$, defined as the space of bounded functions such that
$$|f(z)-f(\zeta)|\leq \varrho(d(z,\zeta)^n).$$
\end{remark}

Spaces $\BMO(\varrho)$ have been introduced by Janson \cite{Janson} in $\bR^n$, and proved to be the dual spaces of maximal Hardy-Orlicz 
spaces related to the growth function $\Phi$ when  $\dst
\varrho(t)=\varrho_\Phi(t):=\frac 1{t\Phi^{-1}(1/t)}$. With our
definition of $H^{\Phi}(\bS^n)$ in terms of atoms, this duality is
straightforward, as remarked by Viviani ( \cite{Viviani}). For
holomorphic Hardy-Orlicz spaces, we have also
\begin{thm}\label{duality}
 The dual space of $\H^\Phi(\bB^n)$ is the space $\BMOA(\varrho)$, defined by
$$\BMOA(\varrho)=\left\{f\in\H^2(\bB^n);\sup_B\inf_{P\in \mathcal P_N(B) }\frac 1{\varrho(\sigma(B))\sigma(B)}\int_B|f-P|^2d\sigma<\infty\right\}$$
where $\dst \varrho(t)=\varrho_\Phi(t):=\frac 1{t\Phi^{-1}(1/t)}$.
The duality is given by the limit as $r<1$ tends to $1$ of scalar
products on spheres of radius $r$.
\end{thm}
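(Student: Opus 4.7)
\emph{Plan.} The proof combines three ingredients already in place: the atomic decomposition of Theorem \ref{atomic-decom}, the boundedness of the Szeg\"o projection $P_S\colon H^\Phi(\bS^n)\to\H^\Phi(\bB^n)$ given by Theorem \ref{boundedness}, and the real-variable duality $(H^\Phi(\bS^n))^\ast=\BMO(\varrho_\Phi)$ established by Janson and Viviani through the atomic decomposition of $H^\Phi(\bS^n)$. The pairing between $f\in\H^\Phi(\bB^n)$ and $g\in\H^2(\bB^n)$ is understood in the limiting sense $\langle f,g\rangle=\lim_{r\to 1^-}\int_{\bS^n}f(r\zeta)\ov{g(r\zeta)}\,d\sigma(\zeta)$; the existence of this limit will come out as a by-product of absolute convergence of the atomic series.

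\emph{Forward inclusion $\BMOA(\varrho)\hookrightarrow(\H^\Phi(\bB^n))^\ast$.} Given $g\in\BMOA(\varrho)$ and $f\in\H^\Phi(\bB^n)$, I would apply Theorem \ref{atomic-decom} to write $f=P_S\bigl(\sum_j a_j\bigr)$ with atoms $a_j$ of order $N>N_p$ supported in balls $B_j$, satisfying $\sum_j\sigma(B_j)\Phi(\lambda_j)\lsim \|f\|_{\H^\Phi(\bB^n)}$ with $\lambda_j=\|a_j\|_2\sigma(B_j)^{-1/2}$. Since $g$ is holomorphic, self-adjointness of $P_S$ on $L^2$ gives $\langle P_S a_j,g\rangle=\langle a_j,g\rangle$. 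Using the moment condition (A2) to subtract a polynomial $P_j\in\mathcal P_N(B_j)$ realizing the $\BMOA(\varrho)$-infimum on $B_j$, Cauchy--Schwarz and the $\BMOA$ definition yield
$$|\langle a_j,g\rangle|=|\langle a_j,g-P_j\rangle|\le \|a_j\|_2\,\|g-P_j\|_{L^2(B_j)}\lsim \sigma(B_j)\,\Phi(\lambda_j)\,\|g\|_{\BMOA(\varrho)},$$
the last inequality being the calibration that encodes the choice of $\varrho_\Phi$ and is precisely the computation performed by Viviani in the real-variable setting. Summing in $j$ bounds $|\langle f,g\rangle|$ by $\|f\|_{\H^\Phi}\|g\|_{\BMOA(\varrho)}$ and simultaneously produces the limit defining the pairing.

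\emph{Reverse inclusion $(\H^\Phi(\bB^n))^\ast\hookrightarrow\BMOA(\varrho)$.} Let $\ell\in(\H^\Phi(\bB^n))^\ast$. By Theorem \ref{boundedness}, $\tilde\ell:=\ell\circ P_S$ is continuous on $H^\Phi(\bS^n)$, and hence represented by some $\tilde g\in\BMO(\varrho)$ via the Janson--Viviani duality. Setting $g:=P_S\tilde g\in\H^2(\bB^n)$, self-adjointness of $P_S$ ensures that $g$ still represents $\ell$: for any $f\in\H^\Phi$, $\langle f,g\rangle=\langle P_S f,\tilde g\rangle=\langle f,\tilde g\rangle=\tilde\ell(f)=\ell(f)$. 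The remaining task is to show $g\in\BMOA(\varrho)$ with norm controlled by $\|\ell\|$. For each fixed ball $B$, I would test $\ell$ against the holomorphic function $A=P_S\bigl(\ov{g-E_N g}\,\mathbf{1}_B\bigr)$, where $E_N$ is the $L^2(B)$-orthogonal projection onto $\mathcal P_N(B)$; by construction $\ov{g-E_N g}\,\mathbf{1}_B$ satisfies (A2), hence is an atom up to normalization, and Propositions \ref{molecule-property} and \ref{at-gives-mol} then guarantee that $A\in\H^\Phi$ with controlled norm. Computing $\ell(A)$ on one side via the representation by $g$, and on the other via the quasi-norm bound on $A$, yields the required $\BMOA$ oscillation estimate on $B$.

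\emph{Main obstacle.} The delicate step is the reverse inclusion, where one must extract $\BMOA(\varrho)$-membership of $g$ from the mere continuity of $\ell$. Hahn--Banach extension is unavailable because $\H^\Phi$ is only a quasi-Banach space, so the test-function argument against molecules is essential; it in turn forces the use of the polynomial projection $E_N$ of positive degree rather than the mean alone, reflecting the technical novelty announced in the introduction, namely that arbitrarily small lower type $p$ requires higher moment conditions and genuine polynomials, not constants, in the $\BMOA$ definition. A secondary technicality is the justification of the limit $r\to 1^-$ defining the pairing, which is handled via the maximal characterization of Theorem \ref{max-charact} together with the absolute convergence of the atomic series.
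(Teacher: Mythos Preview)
Your approach matches the paper's, which states only that the holomorphic duality ``is also a consequence of the atomic decomposition and the continuity property of the Szeg\"o projection'', deferring the real-variable identity $(H^\Phi(\bS^n))^*=\BMO(\varrho_\Phi)$ to Viviani and giving no further detail. One simplification of your reverse inclusion: once $\tilde g\in\BMO(\varrho)$ represents $\ell\circ P_S$, the boundedness of $P_S$ on $\BMO(\varrho)$ (obtained by duality from Theorem~\ref{boundedness}, the embedding $\H^\Phi\hookrightarrow H^\Phi$, and self-adjointness of $P_S$) gives $g=P_S\tilde g\in\BMOA(\varrho)$ directly, so your test-function step is unnecessary---and in that step the conjugate on $g-E_Ng$ should be dropped so that the pairing yields $\ell(A)=\|g-E_Ng\|_{L^2(B)}^2$.
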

In other terms, $\BMOA(\varrho)$ is the space of holomorphic
functions of the Hardy space $\H^2(\bB^n)$ whose boundary values
belong to $\BMO(\varrho)$.
\subsection{Products of functions and Hankel operators}
We now have all prerequisites to study the
product of a function $h\in\H^\Phi(\bB^n)$ with a function in
$b\in\BMOA(\bB^n)$. Remark that, using \eqref{inclusion}, we
already know that the product is well defined as the product of a
function of $\H^p(\bB^n)$ and a function of $\H^s(\bB^n)$ for all
$1<s<\infty$. So it is a function of $\H^q(\bB^n)$ for $q<p$. We
want to replace this first inclusion by a sharp statement.
\begin{prop}\label{prod}
The product maps continuously $\H^\Phi(\bB^n)\times  \BMOA(\bB^n)$
 into $\mathcal H^{\Psi}(\bB^n)$, where $\dst
\Psi(t)=\Phi\left(\frac t{\log(e+t)}\right).$
\end{prop}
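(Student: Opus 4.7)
The plan is to combine the maximal characterization of $\H^\Phi$ (Theorem~\ref{max-charact}) with John--Nirenberg on $\bS^n$, splitting the sphere according to the relative sizes of the admissible maximal functions of $h$ and $b$. By homogeneity of the Luxembourg quasi-norms and by absorbing the constant $b(0)$ (whose product with $h$ remains in $\H^\Phi\subset\H^\Psi$), I would reduce to the case $\|b\|_{\BMOA}=1$, $b(0)=0$ and $\|h\|^{\lux}_{\H^\Phi}=1$. It then suffices to show that
$$\int_{\bS^n}\Psi\bigl(|h(r\zeta)b(r\zeta)|\bigr)\,d\sigma(\zeta)\leq C$$
uniformly in $r\in(0,1)$, with $C$ depending only on $\|b\|_{\BMOA}$; the lower type $p$ of $\Psi$ then converts this into the bilinear bound $\|hb\|^{\lux}_{\H^\Psi}\lsim \|h\|^{\lux}_{\H^\Phi}$.

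Set $M=\mathcal M_\alpha(h)$ and $N=\mathcal M_\alpha(b)$. By Theorem~\ref{max-charact}, $\int\Phi(M)\,d\sigma\lsim 1$. For BMOA functions, $N$ inherits an exponential distributional estimate $\sigma(\{N>t\})\lsim e^{-\beta t}$ from the classical John--Nirenberg inequality applied to $b^*\in\BMO(\bS^n)$, combined with the standard pointwise comparison of $\mathcal M_\alpha(b)$ with the Koranyi Hardy--Littlewood maximal function of $b^*$. Since $|h(r\zeta)b(r\zeta)|\leq M(\zeta)N(\zeta)$ for every $r$, it is enough to estimate $\int\Psi(MN)\,d\sigma$.

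For a large constant $K$ (to be chosen in terms of $\beta$), split $\bS^n=E\cup E^c$ with $E=\{\zeta:N(\zeta)\leq K\log(e+M(\zeta))\}$. On $E$ one has $MN\leq KM\log(e+M)$, and the elementary observation $\log(e+KM\log(e+M))\gsim \log(e+M)$ yields $MN/\log(e+MN)\lsim KM$; property (G1) then gives $\Psi(MN)\lsim \Phi(KM)\leq K\Phi(M)$, so $\int_E\Psi(MN)\,d\sigma\lsim K$. On $E^c$ the defining inequality gives $M<e^{N/K}$, hence $MN<Ne^{N/K}$, and using $\Psi(t)\leq\Phi(t)\lsim 1+t$ (from (G1)) we obtain
$$\int_{E^c}\Psi(MN)\,d\sigma\lsim \sigma(\{N>K\})+\int_{\bS^n}Ne^{N/K}\,d\sigma,$$
both terms being controlled by the exponential decay of $N$ as soon as $K>1/\beta$. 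Choosing $K$ of order $1/\beta$ balances the two contributions and yields the desired uniform bound.

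The only nontrivial technical point is the John--Nirenberg type estimate for $N=\mathcal M_\alpha(b)$; although it reduces to the classical JN on $\bS^n$ for $b^*$, it relies on the pointwise domination of $\mathcal M_\alpha(b)$ by the Hardy--Littlewood maximal function of $b^*$, together with the fact that this operator preserves BMO (hence $\exp L$ integrability after subtracting means). A variant of the argument, bypassing this step altogether, uses the subharmonicity of $\Psi\circ|hb|$ (valid because $\Psi$ is concave non-decreasing and $hb$ is holomorphic) to replace $\|hb\|_{\H^\Psi}$ by the boundary integral $\int\Psi(|h^*b^*|)\,d\sigma$, then runs the same $E$/$E^c$ splitting with $|b^*|$ in place of $N$; the classical John--Nirenberg on $\bS^n$ alone then suffices.
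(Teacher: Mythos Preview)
Your main approach is correct, but it is more elaborate than the paper's and introduces an unnecessary detour through admissible maximal functions. The paper bypasses maximal functions entirely by working directly on each sphere $|z|=r$: from the elementary pointwise inequality
\[
\frac{uv}{\log(e+uv)}\le u+e^{v}-1
\]
one obtains $\Psi(uv)\lsim\Phi(u)+e^{v}-1$, and applying this with $u=|h(r\zeta)|$, $v=c|b(r\zeta)|$ yields at once the H\"older-type bound $\|hb\|_{\H^{\Psi}}^{\lux}\lsim\|h\|_{\H^{\Phi}}^{\lux}\,\|b\|_{\exp L}^{\lux}$. The only fact needed about $b$ is that $b(r\cdot)$ lies uniformly in $\exp L$, which is immediate from John--Nirenberg since $b(r\cdot)\in\BMO(\bS^n)$ with uniformly bounded norm. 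Your $E/E^{c}$ splitting is in effect a case-by-case rederivation of this same pointwise inequality (the $\Phi(u)$ term dominates on $E$, the exponential term on $E^{c}$); but because you first pass to the maximal functions $M,N$ you are forced to argue that $\mathcal M_{\alpha}(b)\in\exp L$. This is true---via $\mathcal M_{\alpha}(b)\lsim\mathcal M^{HL}(|b^{*}|)$ and the fact that $\mathcal M^{HL}$ has uniformly bounded $L^{q}$ norm for large $q$---but it is a genuine extra step that the paper's argument avoids.

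One correction to your variant: the claim that $\Psi\circ|hb|$ is subharmonic ``because $\Psi$ is concave non-decreasing and $hb$ is holomorphic'' is wrongly justified. For holomorphic $f$ one has $\Psi(|f|)=G(\log|f|)$ with $G(t)=\Psi(e^{t})$, and subharmonicity requires $G$ to be \emph{convex}, i.e.\ $s\Psi'(s)$ non-decreasing; concavity of $\Psi$ works against this, not for it. The variant is not needed anyway: either your maximal-function route or, more simply, the paper's direct pointwise inequality on each sphere, settles the matter without ever invoking subharmonicity of $\Psi(|hb|)$.
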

\begin{proof}
We know that $\Psi$ is also a growth function by Remark
\ref{composition}. We prove more: using John Nirenberg
Inequality, we know that a function $b$ in $\BMO$ is also in the exponential class. More
precisely, we only use the fact that $b(r\cdot)$ is uniformly
in the exponential class, and prove that, for such a function $b$ and for a function $h\in\H^\Phi(\bB^n)$,
the product $b\times h$
 is continuously embedded in $\mathcal H^{\Psi}(\bB^n)$. We start from the
following elementary inequality, see \cite{BIJZ}. For any $u,v>0$,
$$\frac{uv}{\log(e+uv)}\le u+e^v-1.$$
It follows that
$$\Psi(uv)\lsim \Phi(u+ e^v-1)\lsim \Phi(u)+e^v-1.$$ When
$u$ and $v$ are replaced by measurable positive functions on the
measure space $(X,d\mu)$, we have, by homogeneity of the
Luxembourg norms, the inequality
$$\Vert fg\Vert_{L^{\Psi}}^{\lux}\lsim \Vert
f\Vert_{L^{\Phi}}^{\lux}\Vert g\Vert_{\exp L}^{\lux}.$$
We refer to \cite{VT} for more general H\"older inequality on Orlicz spaces.

Let us come back to Hardy spaces. Applying this inequality on each
sphere of radius less than $1$, we conclude that
\begin{equation}\label{ineq-product}
\Vert fg\Vert_{\H^{\Psi}}^{\lux} \lsim \Vert
f\Vert_{\H^{\Phi}}^{\lux}\Vert g\Vert_{\exp L}^{\lux} \lsim \Vert
f\Vert_{\H^{\Phi}}^{\lux}\Vert g\Vert_{\BMOA} .
\end{equation}
\end{proof}
We  are going to prove  converse statements.
\begin{thm}\label{factorization}
 Let $A$ be a molecule associated to the ball $B$. Then $A$ may be written as $fg$,
 where $f$ is a molecule and $g$ is in $\BMOA (\bB^n)$. Moreover, $f$ and $g$ may be chosen such
 that
 $$\|g\|_{\BMOA (\bB^n)}\lsim 1,\hspace{1.5cm} \Vert
f\Vert_{{\rm mol}(B,L')}\lsim \frac{\Vert A\Vert_{{\rm
mol}(B,L)}}{\log(e+\sigma(B)^{-1})}$$ when $L'<L$.  In particular,
if $\Psi(\Vert A\Vert_{{\rm mol}(B,L)})\sigma(B)\leq 1$, then
$$\Phi(\Vert f\Vert_{{\rm mol}(B,L')})\lsim \Psi(\Vert
A\Vert_{{\rm mol}(B,L)}).$$\end{thm}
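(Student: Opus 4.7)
The natural candidate for the $\BMOA$-factor is
\[g(z):=\log\frac{c_0}{1-\langle z,z_0\rangle},\]
where $z_0$ is the center of $B$ on $\bS^n$ and $c_0$ is a fixed constant large enough (say $c_0=2e$) so that $1-\langle z,z_0\rangle$ lies in a fixed sector of the right half-plane with modulus at most $2$ for every $z\in\overline{\bB^n}$. With the principal branch of $\log$, $g$ is holomorphic in $\bB^n$, $|\operatorname{Im}g|<\pi/2$, and $|g(z)|\ge\log(c_0/2)\ge 1$ on $\overline{\bB^n}$. On $\bS^n$ the real part of $g$ equals $\log c_0+\log(1/d(z_0,\cdot))$, which is the Koranyi analogue of $\log|x|$ and lies in $\BMO(\bS^n)$ with norm bounded uniformly in $z_0$; together with the bounded imaginary part this yields $\|g\|_{\BMOA}\lsim 1$. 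Setting $f:=A/g$ produces a function in $\H^2(\bB^n)$ with $A=fg$.

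The heart of the proof is the molecular estimate for $f$. Write $M:=\log(e+\sigma(B)^{-1})\approx n\log(1/r_0)$. On the boundary, $|g(\zeta)|\gsim M$ for $\zeta\in B$, while on the dyadic ring $R_k:=\{\zeta\in\bS^n:2^{k-1}r_0\le d(z_0,\zeta)<2^k r_0\}$ one has $|g(\zeta)|\gsim\max(M-C_0 k,1)$ for a constant $C_0$ depending only on $n$. Since $A$ and $g$ are holomorphic, the $\sup_{r<1}$ in the molecular norm reduces to the boundary integral, which decomposes as $\int_B+\sum_{k\ge 1}\int_{R_k}$. The $B$-part is bounded by
\[
\frac{1}{M^{2}}\int_{B}\frac{|A|^2}{\sigma(B)}\,d\sigma\,\lsim\, \frac{\|A\|_{\mathrm{mol}(B,L)}^{2}}{M^{2}}.
\]
On $R_k$ the weight $1+d(z_0,\zeta)^{L'+n}/r_0^{L'+n}$ is of order $2^{k(L'+n)}$, and the molecular bound of order $L$ for $A$ gives $\int_{R_k}|A|^{2}\,d\sigma\lsim 2^{-k(L+n)}\sigma(B)\|A\|_{\mathrm{mol}(B,L)}^{2}$; combined with the lower bound on $|g|$, the $R_k$-contribution is
\[
\lsim\,\frac{2^{-k(L-L')}}{\max(M-C_0 k,1)^{2}}\,\|A\|_{\mathrm{mol}(B,L)}^{2}.
\]
Splitting the sum at $k=M/(2C_0)$: the small-$k$ piece contributes $\lsim M^{-2}\sum_{k}2^{-k(L-L')}\lsim M^{-2}$, while the large-$k$ piece is $\lsim 2^{-(L-L')M/(2C_0)}\sum_{j\ge 1}j^{-2}\lsim M^{-2}$, using $L'<L$ strictly. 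Altogether $\|f\|_{\mathrm{mol}(B,L')}\lsim\|A\|_{\mathrm{mol}(B,L)}/M$, which is the first assertion.

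For the \emph{in particular} clause, concavity of $\Phi$ together with $\|f\|\le C\|A\|/M$ gives $\Phi(\|f\|)\lsim\Phi(\|A\|/M)$. Since $\Psi(t)=\Phi(t/\log(e+t))$ and $\Phi$ is non-decreasing, it is enough to show $\log(e+\|A\|_{\mathrm{mol}(B,L)})\lsim M$. This is trivial when $\|A\|\le 1$ (as $M\ge 1$); otherwise, applying the lower type $p$ of $\Psi$ with $s=\|A\|$ and $t=1/\|A\|$ gives $\Psi(1)\lsim\|A\|^{-p}\Psi(\|A\|)\le\|A\|^{-p}/\sigma(B)$, hence $\|A\|^{p}\lsim 1/\sigma(B)$ and $\log(e+\|A\|)\lsim M$. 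The main technical obstacle is the dyadic estimate that produces the $M^{-2}$ gain: one must carefully balance the slow $\sim 1/(M-k)$ decay of $1/|g|$ against the power decay of $A$ on the rings, which only succeeds because $L'<L$ affords strict geometric summability.
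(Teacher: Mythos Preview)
Your proof is correct and follows the same strategy as the paper: divide the molecule by a holomorphic logarithm of the distance to the center. Two minor differences are worth noting.

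First, the paper places the pole of $g$ slightly inside the ball, taking $g(z)=\log\bigl(4/(1-\langle z,(1-r_0)\zeta_0\rangle)\bigr)$ instead of your $z_0\in\bS^n$; this is cosmetic.

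Second, and more interestingly, the paper avoids your dyadic split entirely. In place of the ring-by-ring estimate $|g(\zeta)|\gsim\max(M-C_0k,1)$, it uses the elementary inequality $\log(uv)\le 2u^{\eps}\log v$ (for $u>1$, $v>e$) to obtain
\[
|g(\zeta)|\gsim \log(e+\sigma(B)^{-1})\left(\frac{r_0}{d(\zeta_0,\zeta)}\right)^{\eps}\qquad\text{for }\zeta\notin B(\zeta_0,2r_0).
\]
With $2\eps\le L-L'$, the factor $(d/r_0)^{2\eps}$ is absorbed directly into the molecular weight, turning the order-$L'$ integral for $f$ into the order-$L$ integral for $A$ with the uniform gain $M^{-2}$. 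This gives the inequality $\Vert f\Vert_{{\rm mol}(B,L')}\lsim \Vert A\Vert_{{\rm mol}(B,L)}/M$ in one line, without the small-$k$/large-$k$ dichotomy. Your argument and the paper's are logically equivalent, but the $\eps$-trick is slicker and makes the role of the strict inequality $L'<L$ completely transparent.

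Finally, for the $\BMOA$ bound you invoke the Koranyi analogue of $\log|x|\in\BMO$; the paper instead verifies directly that $(1-|z|^2)|\nabla g(z)|^2\simeq(1-|z|^2)/|1-\langle z,\zeta\rangle|^2$ is a Carleson measure with uniform constant. Both are standard; the Carleson-measure computation has the advantage of being self-contained.
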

\begin{thm}\label{fact-dec}
 Given any
$f\in\H^{\Psi}(\bB^n)$ there exist $f_j\in\H^\Phi(\bB^n)$, $g_j\in
\BMOA(\bB^n)$, $j\in\bN$, with the norm of $g_j$ bounded by $1$,
such that
$$
f=\sum_{j=0}^{\infty} f_j g_j.
$$
Moreover, we can take for $f_j$ a molecule and, for $\Vert f\Vert_{\H^\Psi}\leq 1$, we have the
equivalence $$\Vert f\Vert_{\H^\Psi}\approx \sum_j  \Phi(\Vert
f_j\Vert_{{\rm mol}(B_j,L)})\sigma(B_j).$$
 In particular,
$$\sum_{j=0}^\infty \Vert f_j\Vert_{\H^\Phi}\Vert
g_j\Vert_{\BMOA}\lsim \Vert f\Vert_{\H^\Psi}.$$
\end{thm}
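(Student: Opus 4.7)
My plan is to assemble the weak factorization by combining the molecular decomposition of $\H^{\Psi}$ (Theorem \ref{mol-dec}) with the factorization of an individual molecule (Theorem \ref{factorization}), and to convert between the molecular, $\H^{\Phi}$, and $\BMOA$ norms by means of Propositions \ref{at-gives-mol} and \ref{prod}. By homogeneity I may rescale and assume $\Vert f\Vert_{\H^{\Psi}}\leq\eta$ for an absolute constant $\eta>0$ to be chosen below. Applying Theorem \ref{mol-dec}, I write $f=\sum_j A_j$, where each $A_j$ is a molecule of some large order $L$ (chosen so that the order $L'<L$ produced by Theorem \ref{factorization} still satisfies $L'>L_p$), associated to a ball $B_j$, with
$$\sum_j \Psi(\Vert A_j\Vert_{{\rm mol}(B_j,L)})\sigma(B_j)\approx \Vert f\Vert_{\H^{\Psi}}.$$

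Taking $\eta$ small enough forces each individual term above to be at most $1$, so the ``in particular'' clause of Theorem \ref{factorization} applies to every $A_j$: it produces $A_j=f_jg_j$ with $f_j$ a molecule of order $L'$ associated to $B_j$, $\Vert g_j\Vert_{\BMOA}\lsim 1$, and
$$\Phi(\Vert f_j\Vert_{{\rm mol}(B_j,L')})\lsim \Psi(\Vert A_j\Vert_{{\rm mol}(B_j,L)}).$$
Summing over $j$ immediately delivers the $\lsim$ half of the equivalence. For the reverse $\gsim$ half, I use $|f|\leq\sum_j|f_jg_j|$ together with the concavity and sub-additivity of $\Psi$ to pull the sum outside the integral on $\bS^n$, reducing matters to the per-$j$ estimate $\Vert f_jg_j\Vert_{\H^{\Psi}}\lsim \Phi(\Vert f_j\Vert_{{\rm mol}(B_j,L')})\sigma(B_j)$. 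This in turn follows from Proposition \ref{prod} in Luxembourg-norm form combined with Proposition \ref{at-gives-mol}, namely $\Vert f_j\Vert_{\H^{\Phi}}\lsim \Phi(\Vert f_j\Vert_{{\rm mol}(B_j,L')})\sigma(B_j)$, and the bound $\Vert g_j\Vert_{\BMOA}\lsim 1$.

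The last assertion $\sum_j\Vert f_j\Vert_{\H^{\Phi}}\Vert g_j\Vert_{\BMOA}\lsim\Vert f\Vert_{\H^{\Psi}}$ is then a direct consequence of the $\lsim$ half just established, together with Proposition \ref{at-gives-mol} and the bound on $\Vert g_j\Vert_{\BMOA}$. The point I expect to require most care is the bookkeeping between the homogeneous Luxembourg quasi-norm, which is the natural setting of Proposition \ref{prod}, and the integral quantity $\int\Psi(|\cdot|)\,d\sigma$ in terms of which $\H^{\Psi}$ is defined; the smallness forced by the normalization $\Vert f\Vert_{\H^{\Psi}}\leq\eta$ is precisely what keeps all individual pieces $f_j g_j$ in the regime where the two versions of the quasi-norm agree up to constants, so the transitions between them are harmless. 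Convergence of the series $f=\sum_j f_jg_j$ (in $\H^\Psi$, and hence in $\H^p$ and in the sense of distributions) is inherited from that of the molecular decomposition $f=\sum_j A_j$ provided by Theorem \ref{mol-dec}.
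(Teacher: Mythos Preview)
Your proof is correct and follows essentially the same route as the paper: the paper's argument is simply that Theorem \ref{fact-dec} ``follows directly from Theorem \ref{mol-dec} (molecular decomposition) and Theorem \ref{factorization} (factorization of molecules), with the bound below for the quasi-norm of $f$ in the Hardy-Orlicz space,'' while ``the bound above uses the direct inequality for molecules, that is Proposition \ref{at-gives-mol}, and for products, that is \eqref{ineq-product}.'' You have spelled out exactly these steps, including the bookkeeping between the Luxembourg and integral quasi-norms and the normalization needed to invoke the ``in particular'' clause of Theorem \ref{factorization}. One small remark: your phrase ``by homogeneity I may rescale'' is slightly imprecise since $\Vert\cdot\Vert_{\H^{\Psi}}$ is not homogeneous, but this is harmless because the hypothesis $\Vert f\Vert_{\H^{\Psi}}\leq 1$ already forces each term $\Psi(\Vert A_j\Vert_{{\rm mol}(B_j,L)})\sigma(B_j)$ to be bounded by an absolute constant, and the doubling property of $\Psi$ makes the ``in particular'' clause of Theorem \ref{factorization} work with any fixed constant in place of $1$.
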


 As
a corollary, we obtain the following characterization of bounded
Hankel operators. Recall that, for $b\in\mathcal H^2(\bB^n)$, the
 (small) Hankel operator $h_b$ of symbol $b$ is given, for  functions $f\in
H^2(\bB^n)$, by $h_b(f)=P_S(b\overline f)$.

\begin{cor}\label{Hankel}
Any Hankel operator $h_b$  extends into a continuous operator from
$\mathcal H^\Phi(\bB^n)$ to $\mathcal H^1(\bB^n)$ if and only if
$b\in (\mathcal H^{\Psi}(\bB^n))'=\BMOA(\varrho_\Psi)$.
\end{cor}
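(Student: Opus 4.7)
\bigskip

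\noindent\textbf{Plan of proof.} The strategy is to reduce both implications to the duality $\BMOA(\varrho_\Psi)=(\H^\Psi(\bB^n))'$ of Theorem \ref{duality}, combined with the product estimate of Proposition \ref{prod} (for sufficiency) and the factorization of Theorem \ref{fact-dec} (for necessity). The bridge between the Hankel pairing and the $\H^\Psi$-duality is the identity, valid for $b\in\H^2(\bB^n)$, $f\in\H^\Phi(\bB^n)$, and $g\in\BMOA(\bB^n)$,
$$\la h_b(f),g\ra=\la P_S(b\ov f),g\ra=\la b\ov f,g\ra=\la b,fg\ra,$$
where the pairings are interpreted as limits of $L^2$-scalar products on spheres of radius $r<1$ as in Theorem \ref{duality}, using the self-adjointness of $P_S$ and $P_S g=g$. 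Via $\H^1$-$\BMOA$ duality, the boundedness of $h_b:\H^\Phi(\bB^n)\to\H^1(\bB^n)$ is then equivalent to the estimate $|\la b,fg\ra|\lsim \Vert f\Vert_{\H^\Phi}^{\lux}\Vert g\Vert_{\BMOA}$ for all such $f$ and $g$.

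For the ``if'' direction, assume $b\in\BMOA(\varrho_\Psi)$. Proposition \ref{prod} gives $\Vert fg\Vert_{\H^\Psi}^{\lux}\lsim\Vert f\Vert_{\H^\Phi}^{\lux}\Vert g\Vert_{\BMOA}$, and Theorem \ref{duality} then yields $|\la b,fg\ra|\lsim \Vert b\Vert_{\BMOA(\varrho_\Psi)}\Vert f\Vert_{\H^\Phi}^{\lux}\Vert g\Vert_{\BMOA}$. Taking the supremum over $g$ with $\Vert g\Vert_{\BMOA}\le 1$ provides the desired operator bound $\Vert h_b\Vert_{\H^\Phi\to\H^1}\lsim \Vert b\Vert_{\BMOA(\varrho_\Psi)}$.

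For the converse, assuming $h_b$ bounded with constant $M$, the plan is to show that $b$ extends to a continuous linear functional on $\H^\Psi(\bB^n)$ and invoke Theorem \ref{duality}. Given $F$ in a dense subclass of $\H^\Psi(\bB^n)$ with $\Vert F\Vert_{\H^\Psi}\le 1$, Theorem \ref{fact-dec} produces a decomposition $F=\sum_j f_j g_j$ with $f_j$ a molecule associated to some ball $B_j$, $\Vert g_j\Vert_{\BMOA}\le 1$, and $\sum_j\Phi(\Vert f_j\Vert_{{\rm mol}(B_j,L)})\sigma(B_j)\approx \Vert F\Vert_{\H^\Psi}$. Since each individual term in this sum is controlled by the whole and hence bounded by a constant, Proposition \ref{at-gives-mol} together with the comparison between the integral and Luxembourg quasi-norms gives $\Vert f_j\Vert_{\H^\Phi}^{\lux}\lsim \Phi(\Vert f_j\Vert_{{\rm mol}(B_j,L)})\sigma(B_j)$. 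Applying the pairing identity termwise, together with $\H^1$-$\BMOA$ duality, leads to
$$|\la b,F\ra|\le \sum_j|\la h_b(f_j),g_j\ra|\lsim M\sum_j \Vert f_j\Vert_{\H^\Phi}^{\lux}\Vert g_j\Vert_{\BMOA}\lsim M\Vert F\Vert_{\H^\Psi},$$
so that $b$ defines an element of $(\H^\Psi(\bB^n))'=\BMOA(\varrho_\Psi)$ with norm $\lsim M$.

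The main delicate step is the converse direction: $\H^\Phi$ and $\H^\Psi$ are only quasi-Banach, the Luxembourg quasi-norm is homogeneous but not sub-additive while the integral quasi-norm is sub-additive but not homogeneous, and one must verify that convergence of the factorization series in the $\H^\Psi$-metric legitimates termwise evaluation of $\la b,F\ra$. This is handled by proving the estimate first on a dense subclass (finite molecular sums) and extending by continuity; once these quasi-norm bookkeeping issues are organized, the corollary is a formal consequence of the theorems already at our disposal.
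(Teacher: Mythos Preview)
Your proposal is correct and follows essentially the same route as the paper: both directions rest on the pairing identity $\la h_b(f),g\ra=\la b,fg\ra$, with sufficiency coming from Proposition \ref{prod} together with the $\H^\Psi$--$\BMOA(\varrho_\Psi)$ duality of Theorem \ref{duality}, and necessity from the weak factorization of Theorem \ref{fact-dec} tested on finite molecular sums. Your additional remarks on the quasi-norm bookkeeping and the passage from a dense subclass to all of $\H^\Psi$ make explicit what the paper leaves implicit, but the argument is otherwise the same.
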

The proof is elementary once we know the previous statements. We give it here. \begin{proof} Let $h_b$ be a Hankel operator of
symbol $b$. Let us first assume that $b$ belongs to
$\BMOA(\varrho_\Psi)$. Then, for any $g$ in $\BMOA$,  we have
\begin{eqnarray*}
|\langle h_b(f),g\rangle|&=&|\langle P_S(b\overline{f}),g\rangle|=|\langle b,fg\rangle|\\
&\lsim &\Vert b\Vert_{\BMOA(\varrho_\Psi)}\Vert fg\Vert_{\H^\Psi}^{\lux}
\lsim \Vert b\Vert_{\BMOA(\varrho_\Psi)}\Vert f\Vert_{\H^\Phi}^{\lux}
\Vert g\Vert_{\BMOA}.
\end{eqnarray*} It follows that $h_b$ is bounded from $\H^\Phi(\bB^n)$ to
$\H^1(\bB^n)$, which we wanted to prove.

Conversely, assume now that $h_b$ is bounded from $\H^\Phi(\bB^n)$
to $\H^1(\bB^n)$ and prove that $b$ belongs to the dual of
$\H^{\Psi}(\bB^n)$. It is sufficient to prove that there exists
some constant $C$ such that
$$|\langle b,f\rangle|\leq C $$
when $f$ belongs to a dense subset of functions in
$\H^{\Psi}(\bB^n)$, with $\Vert f\Vert_{\H^\Psi}\lsim 1$. Because
of Theorem \ref{fact-dec}, it is sufficient to test on such
functions $f$, which may be written as a finite sum of products
$f_j g_j$. More precisely,
\begin{eqnarray*}
|\langle b,f\rangle|&=&|\langle b,\sum f_jg_j|\le \sum_j|\langle P_S(bf_j),g_j\rangle|\\
&=&\sum_j|\langle h_b(f_j),g_j\rangle|\le |||h_b|||\sum_j\Vert
f_j\Vert_{\H^\Phi}^{\lux}\Vert g_j\Vert_{BMOA}\leq C.
\end{eqnarray*}
It ends the proof.
\end{proof}

 All these results may be extended to the more
general setting of strictly pseudoconvex domains or of convex
domains of finite type in $\bC^n$. We give a sketch of the proofs
in section \ref{Casgeneral}.

\section{Maximal characterizations of Hardy-Orlicz spaces}

Let us prove the equivalent characterization of $\H^\Phi$ spaces,
given in Theorem \ref{max-charact}. In order to adapt the proofs
given for usual Hardy spaces, we need the following lemma. Here
$\mathcal M^{HL}$ denotes the Hardy-Littlewood maximal operator
related to the distance on the unit sphere. In fact, the statement
is valid in the general context of spaces of homogeneous type. In
particular we will also use it for the maximal operator on the sphere
related to the Euclidean distance.
\begin{lem}\label{HL}
Let $\Phi$ be a growth function of order $p$ and $\beta<p$. There exists a constant $C>0$ so that, for any
measurable function $f$,
$$\int_{\bS^n}\Phi\left(\mathcal M^{HL}(|f|^{\beta})^{\frac{1}{\beta}}\right)d\sigma \le C \int_{\bS^n}\Phi(|f|)d\sigma.$$
\end{lem}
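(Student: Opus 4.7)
The plan is to reduce the Orlicz-type estimate to the classical weak-type $(1,1)$ inequality for $\mathcal M^{HL}$ and then use the lower-type hypothesis on $\Phi$. I would set $g:=|f|^\beta$ so that the left-hand side reads $\int_{\bS^n}\Phi\bigl((\mathcal M^{HL} g)^{1/\beta}\bigr)d\sigma$. For each $\lambda>0$, decompose $g=g_1+g_2$ with $g_1:=g\,\mathbf 1_{\{g>\lambda^\beta/2\}}$. Since $g_2\le \lambda^\beta/2$ pointwise, also $\mathcal M^{HL} g_2\le \lambda^\beta/2$, so that $\{\mathcal M^{HL} g>\lambda^\beta\}\subset\{\mathcal M^{HL} g_1>\lambda^\beta/2\}$. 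The weak-type $(1,1)$ bound for $\mathcal M^{HL}$ on the space of homogeneous type $\bS^n$ then gives
$$\sigma\bigl\{(\mathcal M^{HL} g)^{1/\beta}>\lambda\bigr\}\le \frac{C}{\lambda^\beta}\int_{\{|f|>2^{-1/\beta}\lambda\}}|f|^\beta\, d\sigma.$$

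Next, I would use the layer-cake representation $\int \Phi(h)\,d\sigma=\int_0^\infty \sigma\{h>\lambda\}\,d\Phi(\lambda)$ (valid since $\Phi$ is continuous, increasing with $\Phi(0)=0$) and Fubini to obtain
$$\int_{\bS^n}\Phi\bigl((\mathcal M^{HL} g)^{1/\beta}\bigr)d\sigma\le C\int_{\bS^n}|f(x)|^\beta\Bigl(\int_0^{2^{1/\beta}|f(x)|}\frac{d\Phi(\lambda)}{\lambda^\beta}\Bigr)d\sigma(x).$$
The problem thus reduces to the pointwise estimate $|f|^\beta\int_0^{c|f|}\lambda^{-\beta}d\Phi(\lambda)\lesssim \Phi(|f|)$, which I would derive from the lower-type property (G2). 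Indeed, (G2) rewritten as $\Phi(\lambda)\le c(\lambda/T)^p\Phi(T)$ for $\lambda\le T$ yields, after integration by parts,
$$\int_0^T\frac{d\Phi(\lambda)}{\lambda^\beta}=\frac{\Phi(T)}{T^\beta}+\beta\int_0^T\frac{\Phi(\lambda)}{\lambda^{\beta+1}}d\lambda\le \frac{\Phi(T)}{T^\beta}+\frac{c\beta\,\Phi(T)}{(p-\beta)T^\beta},$$
the last integral converging precisely because $\beta<p$. Combined with the doubling property $\Phi(cT)\lesssim \Phi(T)$, this gives the pointwise bound, and integration over $\bS^n$ finishes the proof.

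I expect the main obstacle to be the last step: showing that $\int_0^T \lambda^{-\beta}d\Phi(\lambda)$ is finite and bounded by a constant times $\Phi(T)/T^\beta$. This is exactly where the hypothesis $\beta<p$ is essential, since otherwise the integral diverges at $0$. A secondary minor point is to justify the layer-cake formula for a general nondecreasing continuous $\Phi$ (no smoothness is available a priori), which one handles via the Lebesgue–Stieltjes measure $d\Phi$ and monotone convergence.
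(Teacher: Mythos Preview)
Your proof is correct and follows essentially the same route as the paper: both arguments use the weak-type $(1,1)$ inequality for $\mathcal M^{HL}$ in the refined form $\lambda^\beta\,\sigma\{\mathcal M^{HL}g>\lambda^\beta\}\lsim\int_{\{g>\lambda^\beta/2\}}g\,d\sigma$, pass to a distribution-function representation, exchange the order of integration, and then invoke the lower-type hypothesis (with $\beta<p$) to control the inner integral. The only cosmetic difference is that the paper introduces $\Psi(t)=\Phi(t^{1/\beta})$ (of lower type $p/\beta>1$) and works with dyadic level sets, whereas you stay with $\Phi$ and use the continuous Stieltjes layer-cake plus integration by parts; the two computations are equivalent via the change of variable $t=\lambda^\beta$.
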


\begin{proof}
Let us note $g:=|f|^{\beta}$. We only use the fact that
$$ t \sigma\left(\mathcal M^{HL}(g)\ge t\right)\lsim \int_{\{g\ge t/2\}}g d\sigma, $$ which is a  consequence of the weak (1,1) boundedness of
$\mathcal M^{HL}$.

Denote by $\Psi$ the function defined by $\Psi(t):= \Phi(t^{\frac
1 \beta} )$, which is of lower type $p/\beta >1$. In particular,
\begin{equation}\label{lower}
\int_0^s\frac{\Psi(t)}{t^{2}}dt=s^{-1}\int_0^1\frac{\Psi(st)}{t^2}dt\lsim
\frac{\Psi(s)}{s}
\end{equation}
since $\int_0^1t^{p/\beta -2}dt$ is finite. It follows, cutting
the integral into intervals $(2^k,2^{k+1})$, that
\begin{equation}\label{lower-bis}\sum_{k ;\;
s>2^k}2^{-k}\Psi(2^k)\lsim \frac {\Psi(s)}s.\end{equation}
 Now, we
have to estimate
\begin{eqnarray*}
\int_{\bS^n}\Psi(\mathcal M^{HL}(g))d\sigma & \leq&\sum_k \Psi(2^k)\sigma\left(\mathcal M^{HL}(g)\ge 2^{k-1}\right)\\
&\lsim & \sum_k 2^{-k}\Psi(2^k)\int_{\{g\ge 2^ {k-2}\}} g d\sigma .
\end{eqnarray*}

Exchanging the integral and the sum and using \eqref{lower-bis},
we obtain that the left hand side is bounded by $C \int_{\bS^n}
\Psi(g)d\sigma $, which we wanted to prove.
\end{proof}
\begin{proof}[Proof of Theorem \ref{max-charact}]
We proceed in two steps, as it is classical. Let us note
$$\mathcal{M}_0(f)(\zeta) = \sup_{0<r<1}|f(r\zeta)|$$
the radial maximal function. We first prove that
\begin{equation}\label{sup}\left\Vert \Phi(\mathcal{M}_0(f))\right\Vert_{L^1(\bS^n)}\le C \Vert f\Vert_{\H^{\Phi}(\bB^n)}.\end{equation}
Let  $\beta<p$, $\Psi$  and $g= |f|^\beta$ be as before. The
function $g$ is sub-harmonic, and satisfies the condition
$$\sup_{0<r<1}\int_{\bS^n}\Psi(g(r\zeta))d\sigma(\zeta)<\infty.$$
We claim that there exists some constant $C$, independent of $g$,
such that
\begin{equation}\label{sup-bis}
  \int_{\bS^n}\Psi(\sup_{0<r<1}g(r\zeta))d\sigma(\zeta)\le C\sup_{0<r<1}\int_{\bS^n}\Psi(g(r\zeta)d\sigma(\zeta),
\end{equation}
which will immediately imply \eqref{sup}. The proof of
\eqref{sup-bis} follows the same lines as in the unit disc. Assume
first that $g$ extends into a continuous function on the closed ball and
call $\tilde g$ the function on the unit sphere that coincides
with this extension. With this assumption,  the right hand side is the integral of $\Psi (\tilde g)$. Then it follows from the maximum principle
that $g\leq G$, where $G$ is the Poisson integral of $\tilde g$.
Moreover, we know that $\sup_{0<r<1}g(r\zeta)$ is bounded by the
Hardy Littlewood maximal function (for the Euclidean metrics on
the unit sphere) of $\tilde g$. We conclude for the inequality
\eqref{sup-bis} by using the previous lemma, or its proof, in the
context of this maximal function. To conclude for general $g$, it
is sufficient to see that Inequality \eqref{sup} is valid for $g$
once it is valid for all $g(r\cdot)$, with $0<r<1$.

 Let $\tilde f$ be the a. e. boundary values of $f$, which we know to exist since $f$ belongs to $\mathcal
 H^{p}(\bB^n)$ by \eqref{inclusion}. Remark that once we have done this first step, we also
know, using Fatou's lemma, that $\Vert \Phi(|\tilde
f|)\Vert_{L^1(\bS^n)}\le \Vert f\Vert_{\mathcal H^{\Phi}}$.

\medskip

Next, we recall that (see for instance \cite{G}, or \cite{St2} for
the Euclidean case) that we have
the inequality
\begin{equation}\label{max-in}
  \mathcal M_\alpha(f)^\beta\le C_\alpha \mathcal
M^{HL}\left(\mathcal {M}_0(f)^{\beta}\right).
\end{equation}
 We then use Lemma
\ref{HL} to conclude for the proof of Therorem \ref{max-charact}.

\end{proof}

We need stronger characterizations of $\H^{\Phi}(\bB^n)$ for
the atomic decomposition. First, remark that when looking at the
proof of \eqref{max-in}, one observes that the constant $C_\alpha$
has a polynomial behavior when $\alpha$ tends to $\infty$. In the
Euclidean case, details are given in \cite{St2}. This means in
particular, using the fact that $\Phi$ is doubling, that for some
large $N_0$ and all $\alpha>0$, we have the inequality
\begin{equation}\label{aire-bis}\left\Vert \Phi(\mathcal M_\alpha(f))\right\Vert_{L^1(\bS^n)}\le C(1+\alpha)^{N_0} \Vert f\Vert_{\H^{\Phi}(\bB^n)}.\end{equation}

Let us consider now the tangential variant of admissible maximal
operators, defined by
\begin{equation}\label{f-double-star-M}
\mathcal N_M(f) (\zeta) = \sup_{rw\in\bB^n} \biggl( \frac{1- r}
{(1-r)+d(\zeta,w)}\biggr)^M |f(rw)|.
\end{equation}
Here $d(\zeta,w)$ denotes the pseudo-distance on $\bS^n$, given as before by
$d(\zeta,w):=|1-\langle \zeta,w\rangle|$. We claim that the
following identity holds.
\begin{equation}\left\Vert \Phi(\mathcal N_M(f))\right\Vert_{L^1(\bS^n)}\le C \Vert f\Vert_{\H^{\Phi}(\bB^n)}.\end{equation}
 Using the definition, we have
\begin{eqnarray*}
\mathcal N_Mf(\zeta)&=& \sup_{k\in\bN}\sup_{rw\in\mathcal
A_{2^k}(\zeta)}\left(\frac{1-r}{(1-r)+d(\zeta,w)}\right)^M|f(rw)|\cr
&\lsim & \sup_{k\in\bN}2^{-kM}\mathcal M_{2^k}f(\zeta).
\end{eqnarray*} It then follows that
\begin{eqnarray*}
\Vert  \Phi(\mathcal N_M(f))\Vert_{L^1(\bS^n)}\le
\sum_{k\in\bN}\Vert \Phi(2^{-kM}\mathcal
M_{2^k}f)\Vert_{L^1(\bS^n)}&\le&
\sum_{k\in\bN}2^{-kMp}\Vert\Phi(\mathcal
M_{2^k}f)\Vert_{L^1(\bS^n)}.\end{eqnarray*} For $Mp>N_0$ we can
conclude after having  used \eqref{aire-bis}.

Le us now introduce the {\sl grand maximal function}. Firstly, we define the set of {\em smooth bump
 functions  at }
$\zeta$, which we note
$\K_\alpha^N(\zeta)$, as the set of smooth functions $\varphi$ supported in $B(\zeta_0,r_0)$ for some $\zeta_0 \in\A_\alpha(\zeta)$ 
and normalized in the following way. In the neighborhood of $\zeta_0$, when we use special coordinates related to $\zeta_0$, the unit sphere coincides with the graph $\Re w_1=h(\Im w_1, w')$, with $w'=(w_2, \cdots, w_n)$ and $h$ a smooth function. We note $w_j=x_j+y_j$, and consider all derivatives $D^{(k,l)}\varphi$, where $D^{(k,l)}$ consists in $k$ derivatives in $x'$ or $y'$, and $l$ derivatives in $y_1$. We assume that bump functions $\varphi \in \K_\alpha^N(\zeta)$ satisfy the inequality
$$\sum_{k+l\le N, }
\|D^{(k,l)}\varphi \|_{L^\infty(B(\zeta_0,r_0))}r_0^{k/2+l}\le \sigma(B)^{-1}.
$$

The {\em grand  maximal function\/} is defined as
\begin{equation}\label{K-gamma-M-zeta-function}
K_{\alpha,N} (f)(\zeta)= \sup_{ \varphi\in\K_\alpha^N(\zeta)}
\big| \lim_{r\rightarrow 1}\int_{S^n} f(r\zeta)\varphi(\zeta)
d\sigma(\zeta)|.
\end{equation}
The fact that the limit exists for $f\in \H^\Phi(\bB^n)\subset
\H^p(\bB^n)$ is due to the fact that holomorphic functions in
Hardy spaces have boundary values as distributions.

We use the following inequality (see \cite{GP}, and \cite{St2}
for the Euclidean case).
\begin{lem}\label{one}
With the definitions above, there exist $c=c(\bB^n)$ and $\tilde
N=\tilde N(\alpha,N)$ such that$$
K_{\alpha,N}f(\zeta) \lsim
\mathcal M_{c\alpha}(f) (\zeta) + \mathcal N_{\tilde N}(f) (\zeta).
$$
\end{lem}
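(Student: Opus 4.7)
The plan is to fix $\varphi \in \K_\alpha^N(\zeta)$ supported in $B(\zeta_0,r_0)$ with $\zeta_0 \in \A_\alpha(\zeta)$, and to bound $|\lim_{r\to 1}\int f(r\cdot)\varphi\, d\sigma|$ by a sum of the two claimed maximal functions, following the scheme of \cite{St2} adapted to the Koranyi geometry of $\bS^n$.

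First, I would introduce the interior reference point $z^\star := (1-r_0)\zeta_0 \in \bB^n$. Since $\zeta_0 \in \A_\alpha(\zeta)$, a direct geometric check gives $z^\star \in \A_{c\alpha}(\zeta)$ for some $c = c(\alpha)$; moreover, the whole Koranyi-type ball in $\bB^n$ centered at $z^\star$ of radius $\simeq r_0$ lies in $\A_{c\alpha}(\zeta)$. In particular, $|f(z)| \le \mathcal M_{c\alpha}(f)(\zeta)$ for every such $z$. Next I would write $\varphi = \psi_{r_0} * \varphi + (\varphi - \psi_{r_0} * \varphi)$, where $\psi_{r_0}$ is a smooth anisotropic approximate identity at scale $r_0$ adapted to the Koranyi metric (for instance the Poisson kernel of $\bB^n$ evaluated at height $r_0$, or the Szeg\"o kernel suitably normalized). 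Pairing $f(r\cdot)$ against the principal part $\psi_{r_0}*\varphi$ can be rewritten, using the Poisson (or Szeg\"o) reproducing property and the fact that $f$ is holomorphic, as an average of $f$ over a region contained in $\A_{c\alpha}(\zeta)$, hence bounded by $\mathcal M_{c\alpha}(f)(\zeta)$.

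The error term $\varphi - \psi_{r_0}*\varphi$ is where $\mathcal N_{\tilde N}$ enters. By an anisotropic Taylor expansion of $\varphi$ in the coordinates $(y_1,x',y')$ of scale $r_0$, and using that $\psi_{r_0}$ is an approximate identity of order $N$, this difference decomposes as a sum $\sum_{j\ge 0} 2^{-jN}\varphi_j$, where each $\varphi_j$ is a smooth bump supported in $B(\zeta_0,2^jr_0)$ with $\|\varphi_j\|_{L^\infty}\lsim \sigma(B(\zeta_0,2^jr_0))^{-1}$. The derivative bounds in the definition of $\K_\alpha^N(\zeta)$, with the mixed weight $r_0^{k/2+l}$, are precisely what is required for this Taylor-type decomposition to yield clean constants. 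Pairing each $\varphi_j$ against $f(r\cdot)$ and using the defining decay factor $\bigl((1-r)/((1-r)+d(\zeta,w))\bigr)^{\tilde N}$ of $\mathcal N_{\tilde N}$, one obtains a contribution bounded by $2^{j\tilde N}\mathcal N_{\tilde N}(f)(\zeta)$, so the series converges in $j$ as soon as $N > \tilde N$, with $\tilde N = \tilde N(\alpha,N)$ read off from the geometric constants.

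The main obstacle is making the anisotropic Taylor expansion precise: the complex-normal direction $y_1$ scales like $r_0$ while the complex-tangential directions scale like $r_0^{1/2}$, and the mixed exponent $r_0^{k/2+l}$ in the bump normalization encodes exactly this dichotomy. This is the step that differs substantively from the Euclidean argument in \cite{St2} and must be imported from \cite{GP}; once the decomposition $\varphi - \psi_{r_0}*\varphi = \sum_j 2^{-jN}\varphi_j$ is in place with the correct anisotropic scaling, the remainder of the proof is a direct geometric sum.
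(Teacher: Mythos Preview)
The paper does not prove Lemma~\ref{one}; it is simply quoted from \cite{GP} (with \cite{St2} given as the Euclidean prototype) and used as a black box. There is therefore no in-paper argument to compare yours against.

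Your outline has the right overall shape---a reproducing-kernel main term producing $\mathcal M_{c\alpha}$ and a dyadic tail controlled by $\mathcal N_{\tilde N}$---but two of the steps do not work as written. First, the claimed decay rate in $\varphi-\psi_{r_0}*\varphi=\sum_{j\ge0}2^{-jN}\varphi_j$ is not what one actually gets. If $\psi_{r_0}$ is compactly supported, the difference stays in a ball of radius $\approx r_0$ and there are no expanding shells at all; if $\psi_{r_0}$ is the Poisson or Szeg\"o kernel at height $r_0$, the tail on the shell $B(\zeta_0,2^{j+1}r_0)\setminus B(\zeta_0,2^jr_0)$ has size $\lsim 2^{-j}\sigma(B(\zeta_0,2^jr_0))^{-1}$, coming from the kernel estimate $|S(z^\star,w)|\lsim r_0\,d(\zeta_0,w)^{-(n+1)}$. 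The $N$ derivative bounds on $\varphi$ do not improve this tail decay to $2^{-jN}$; they are used elsewhere in the argument (to control the near part, not the far part). Second, and more seriously, the sentence ``pairing each $\varphi_j$ against $f(r\cdot)$ \dots\ gives a contribution bounded by $2^{j\tilde N}\mathcal N_{\tilde N}(f)(\zeta)$'' breaks down in the limit $r\to 1$: the pointwise inequality $|f(rw)|\le\bigl((1-r+d(\zeta,w))/(1-r)\bigr)^{\tilde N}\mathcal N_{\tilde N}(f)(\zeta)$ diverges as $r\to 1$ for fixed $w\in\bS^n$, so a boundary integral cannot be estimated this way. The mechanism in \cite{GP} is to use the reproducing property once more on each dyadic piece, converting the boundary pairing into values of $f$ at \emph{interior} points $z_j$ of depth comparable to $2^jr_0$; for such $z_j$ the weight $(1-|z_j|)/\bigl((1-|z_j|)+d(\zeta,\pi(z_j))\bigr)$ is bounded below and $|f(z_j)|\lsim\mathcal N_{\tilde N}(f)(\zeta)$ holds without any divergent factor. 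With these two corrections your sketch aligns with the argument in the cited references.
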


We now turn to the atomic decomposition. We first prove in the next section that holomorphic extensions of functions in $H^{\Phi}(\bS^n)$ are functions of the Hardy-Orlicz space.
\section{Atoms and molecules \ref{boundedness}}

We first consider the Szeg\"o projection of atoms and prove the
following lemma.
\begin{lem}\label{estimA}

Let $a$ be an atom of order $N$
associated to the ball $B=B(\zeta_0,r_0)$, and let $A=P_S(a)$.
Then $A$ satisfies the following estimates.
\begin{equation}\label{close}
\sup_{0<r<1}\int_{B(\zeta_0, 2r_0)}\Phi(|A(rw)|)\frac{d\sigma(w)}{\sigma(B)}\lsim
\Phi(\Vert a\Vert_2\sigma(B)^{-\frac 12}),
\end{equation}
\begin{equation}\label{far}
|A(r\zeta)| \lsim \biggl(\frac{r_0}{d(\zeta,\zeta_0)}\biggr)^{
n+\frac{N+1}2} \Vert a\Vert_2\sigma(B)^{-\frac 12} \  \ \mbox{\rm for} \
d(\zeta,\zeta_0)\ge 2r_0.
\end{equation}
\end{lem}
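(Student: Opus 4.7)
My plan is to prove the two estimates separately; the close estimate is essentially a Jensen/Cauchy--Schwarz argument exploiting $L^2$-boundedness of $P_S$, while the far estimate relies on subtracting an optimal polynomial from the Szeg\H{o} kernel and using the moment condition.

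For \eqref{close}, I would exploit the fact that $P_S$ is an orthogonal projection on $L^2(\bS^n)$, so that the radial $L^2$ norms satisfy $\int_{\bS^n}|A(rw)|^2\,d\sigma(w)\le \Vert a\Vert_2^2$ for every $r<1$. Combining Cauchy--Schwarz on $B(\zeta_0,2r_0)$ with $\sigma(B(\zeta_0,2r_0))\approx \sigma(B)$, one obtains
\[
\frac{1}{\sigma(B)}\int_{B(\zeta_0,2r_0)}|A(rw)|\,d\sigma(w)\lsim \Vert a\Vert_2\,\sigma(B)^{-1/2}.
\]
Since $\Phi$ is concave with $\Phi(0)=0$, Jensen's inequality promotes this to the desired integral bound with $\Phi$ applied inside, using that $\Phi$ is non-decreasing.

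For \eqref{far}, I would write $A(r\zeta)=\int_B S(r\zeta,w)\,a(w)\,d\sigma(w)$ where $S(z,w)=c_n(1-\la z,w\ra)^{-n}$ is the Szeg\H{o} kernel. The key step is to use the moment condition (A2): for $d(\zeta,\zeta_0)\ge 2r_0$ I take $T_N$ to be the Taylor polynomial of degree $N$ of $w\mapsto S(r\zeta,w)$ centered at $\zeta_0$ in the special real coordinates $(y_1,\ldots,y_n,x_2,\ldots,x_n)$. Since $T_N\in \P_N(\zeta_0)$, we can subtract it for free and obtain by Cauchy--Schwarz
\[
|A(r\zeta)|\le \Vert a\Vert_2\cdot\Vert S(r\zeta,\cdot)-T_N\Vert_{L^2(B)}.
\]
On $B$ one has $|w-\zeta_0|\lsim r_0^{1/2}$ in the complex tangential directions and $\lsim r_0$ in the ``normal'' direction $y_1$, matching the scaling that appears in the bump-function normalization $r_0^{k/2+l}$. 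Standard anisotropic estimates for the Szeg\H{o} kernel give $|D^{(k,l)}_w S(r\zeta,w)|\lsim d(\zeta,\zeta_0)^{-n-k/2-l}$, so the Taylor remainder on $B$ is bounded by
\[
|S(r\zeta,w)-T_N(w)|\lsim \sum_{k+l=N+1}\frac{r_0^{k/2+l}}{d(\zeta,\zeta_0)^{n+k/2+l}}\lsim \frac{1}{d(\zeta,\zeta_0)^n}\Bigl(\frac{r_0}{d(\zeta,\zeta_0)}\Bigr)^{(N+1)/2},
\]
where the dominant term comes from purely tangential derivatives. Integrating over $B$ and using $\sigma(B)\approx r_0^n$ converts this $L^\infty$ bound into $\Vert S(r\zeta,\cdot)-T_N\Vert_{L^2(B)}\lsim \sigma(B)^{-1/2}(r_0/d(\zeta,\zeta_0))^{n+(N+1)/2}$, which yields exactly \eqref{far}.

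The main technical obstacle is the anisotropic Taylor estimate: one must keep track of the fact that derivatives in $y_1$ cost twice as much as derivatives in the tangential variables, both on the size of the kernel and on the size of the increments $w-\zeta_0$ on $B$. This is what forces the ``half-integer'' exponent $n+(N+1)/2$ in \eqref{far} rather than a naive $n+N+1$, and what makes the choice of the polynomial class $\P_N(\zeta_0)$ (isotropic in the real coordinates, but with $k+l\le N$) compatible with the geometry of the Kor\'anyi balls. Everything else is an exercise in Jensen's inequality and $L^2$ theory.
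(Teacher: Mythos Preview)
Your proposal is correct and follows essentially the same route as the paper: Jensen's inequality (with the built-in Cauchy--Schwarz step) combined with $L^2$-boundedness of $P_S$ for \eqref{close}, and subtraction of the Taylor polynomial of $S(r\zeta,\cdot)$ at $\zeta_0$ together with the anisotropic kernel bound $|D^{(k,l)}_wS|\lsim d(\zeta,\zeta_0)^{-n-k/2-l}$ for \eqref{far}. The only cosmetic difference is that the paper derives this derivative bound explicitly from $(|\zeta'|^k+|w'|^k)|1-\zeta\cdot\bar w|^{-(n+k+l)}$ and the inequality $|\zeta'|,|w'|\lsim d(\zeta,\zeta_0)^{1/2}$, whereas you invoke it as a known estimate; the subsequent Taylor-remainder and $L^2$ bookkeeping is identical.
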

\bpf Let us prove \eqref{close}. We assumed that $\Phi$ is
concave. In particular, if $d\mu$ is a probability measure and $f$
a positive function on the measure space $(X, d\mu)$, then we have
Jensen Inequality
\begin{eqnarray}\label{jensen}
  \int_X\Phi(f)d\mu\leq \Phi\left(\int_X f
d\mu\right)\leq \Phi \left(\|f\|_{L^2(X, d\mu)}\right).
\end{eqnarray}
 If we use
it for the  measure $d\sigma$ on $B(z_0, 2r_0)$ after
normalization, we find that

\begin{equation}\label{jensen2}
\sup_{0<r<1}\int_{B(\zeta_0,
2r_0)}\Phi(|A(rw)|)\frac{d\sigma(w)}{\sigma(B)}\lsim\Phi\left(
\frac{\|A\|_{\H^{2}}}{(\sigma(B))^{1/2}}\right).
\end{equation}
Since the Szeg\"o projection is bounded in $L^2$,  we have the
inequality $$\|A\|_{\H^{2}}\leq \|a\|_{L^{2}}$$ and conclude for
\eqref{close}.

\bigskip

The inequality \eqref{far} is classical and used
for classical Hardy spaces. It is a consequence of the
estimates of the Sz\"ego kernel, which are explicit for the unit ball. Without loss of generality we can assume that $\zeta_0=(1, 0, \cdots, 0)$, so that the coordinates related to $\zeta_0$ may be taken as the ordinary ones. Otherwise we use the action of the unitary group. In the neighborhood of $\zeta_0$, the unit sphere coincides with the graph $\Re w_1=h(\Im w_1, w')$, with $w'=(w_2, \cdots, w_n)$. We recall that $S(\zeta, w)=c_n (1-\zeta.\bar w)^{-n}$. In the following estimates, we are interested in estimates on $D^{(k,l)}_wS(r\zeta, (h(t_1, s'+it')+it_1, w'))$, where $D^{(k,l)}$ consists in $k$ derivatives in $s'$ or $t'$, and $l$ derivatives in $t_1$. It follows from elementary computations  that
$$|D^{(k,l)}_wS(r\zeta, (h(t_1, s'+it')+it_1, w'))|\leq C  (|\zeta'|^k+|w'|^k)|1-\zeta.\bar w|^{-(n+k+l)}.$$
 In particular, for $d(w, \zeta_0)<r_0$ and $\zeta \notin B(\zeta_0, 2r_0)$, we know that
 $|1-\zeta.\overline w|\simeq |1-\zeta.\overline {\zeta_0}|\gsim r_0$. In particular, we have $|w'|\lsim |1-\zeta.\overline {\zeta_0}|^{\frac 12}$, and the same for $|\zeta'|$. So, the following holds
 \begin{equation}\label{estimateSzego}|D^{(k,l)}_wS(r\zeta, (h(t_1, s'+it')+it_1, w'))|\leq C |1-\zeta.\overline{ \zeta_0}|^{-(n+\frac k2+l)} \end{equation}
We use  the vanishing moment condition, in the computation of
$$P_S a(r\zeta)=\int S(r\zeta, w)a(w)d\sigma(w),$$
to replace $S(r\zeta,\cdot)$ by $S(r\zeta,\cdot)-P$, where $P$ is its Taylor polynomial at order $N$. By Taylor's formula,  the rest may be bounded by the sum, for $k+l=N+1$, of the quantities $|t_1|^l|w'|^k |1-z.\overline{ \zeta_0}|^{-(n+\frac k2+l)}$.
Using the fact that $|t_1|^l|w'|^k \lsim r_0^{\frac k2 +l}$, we have
$$|S(r\zeta, w)-P(w)|\leq C\frac {r_0^{\frac {N+1}2}}{d(z, \zeta_0)^{n+\frac {N+1}2}}.$$
This gives the result, using the fact that $\sigma(B)\lsim r_0^n$.
\epf
\bpf[Proof of Proposition \ref{molecule-property}] The fact  that
 $P_Sa$ is a molecule is classical. We give the proof for completeness. Coming back to  the definition of
$\Vert P_S(a)\Vert_{\text{mol}(B,L)}^2$ given in \eqref{mol}, we cut the
integral involved into two pieces. We already know that the
integral on $B(\zeta_0, 2r_0)$ satisfies the right estimate. So is sufficient to show that
$$
\int_{\bS^n\setminus B(\zeta_0,
2r_0)}\left(\frac {d(\xi,
\zeta_0)}{r_0}\right)^{L+n}\sup_r|P_Sa(r\xi)|^2\frac{d\sigma(\xi)}{\sigma(B)}\lsim
\|a\|_2^2.$$ Using \eqref{far}, it is a consequence of the
estimate
\begin{equation}\label{rest}
  \int_{\bS^n\setminus B(\zeta_0,
2r_0)}\left(\frac {r_0}{d(\xi,
\zeta_0)}\right)^M\frac{d\sigma(\xi)}{\sigma(B)}\leq C
\end{equation}
for some constant $C$ that does not depend on $\zeta_0$ and $r_0$,
when $M>n$ (see for instance \cite{Rudin}).

\epf

 \bpf[Proof of Proposition \ref{at-gives-mol}] Let
$A$ be a molecule of order $L$ associated to $B:=B(z, r)$. We want
to prove that $A$ belongs to $\mathcal H^\Phi(\bB^n)$ for $L$
large enough, with the estimate
$$\Vert A\Vert_{\mathcal H^\Phi}\lsim \Phi(\Vert A\Vert_{\text{mol}(B,L)})\sigma(B).$$
Let us note $B_k:=B(z, 2^k r)$. It is sufficient to prove that,
for $g$ a positive function on the unit sphere,
$$\int_{\bS^n}\Phi(g) \frac{d\sigma}{\sigma(B)}\lsim \Phi
\left(\left(\int_{\bS^n}\left(\frac {d(z,\xi)}r\right )^{L+n}
g(\xi)^2\frac{d\sigma(\xi)}{\sigma(B)}\right)^{1/2}\right).$$
Cutting the integral into pieces, it is sufficient to prove that
$$\int_{B}\Phi(g) \frac{d\sigma}{\sigma(B)}\lsim \Phi
\left(\left(\int_{B}
g(\xi)^2\frac{d\sigma(\xi)}{\sigma(B)}\right)^{1/2}\right),$$
which is a direct consequence of Jensen Inequality \eqref{jensen}
as before, and, for $k\ge 1$,
$$\int_{B_k\setminus B_{k-1}}\Phi(g) \frac{d\sigma}{\sigma(B)}\lsim 2^{-k\varepsilon}\Phi
\left(\left(2^{k(L+n)}\int_{B_k\setminus B_{k-1}}
g(\xi)^2\frac{d\sigma(\xi)}{\sigma(B)}\right)^{1/2}\right)$$ for
some $\varepsilon
>0$. To prove this last inequality, we use again Jensen Inequality
\eqref{jensen} for the measure $d\sigma$ on $B_k\setminus
B_{k-1}$, divided by its total mass $\sigma(B_k\setminus
B_{k-1})\approx 2^{kn}\sigma(B)$. This gives
$$\int_{B_k\setminus B_{k-1}}\Phi(g) \frac{d\sigma}{\sigma(B)}\lsim 2^{kn}\Phi
\left(\left(2^{-kn}\int_{B_k\setminus B_{k-1}}
g(\xi)^2\frac{d\sigma(\xi)}{\sigma(B)}\right)^{1/2}\right).$$ We
conclude by using the fact that $\Phi$ is of lower type $p$, which
allows to write that $2^{kn}\Phi(t)\lsim \Phi(2^{kn/p}t)$. It is
sufficient to choose $L>n(2/p-2)$. \epf

\section{Proof of the atomic decomposition Theorem \ref{atomic-decom}}

Let $f$ be a fixed function in $\mathcal H^{\Phi}$. As noticed
before,  $f$ admits boundary values defined
a.e. on $\bS^n$, that we still denote by $f$.

 We fix also $N$ an integer larger than $N_p$.

Let $k_0$ be the least integer such that
\begin{equation}\label{k-0}
\| \Phi(K_{\alpha,M}(f)+\mathcal M_\alpha(f))\|_{L^1(\bS^n)} \le 2^{k_0}.
\end{equation}
For a positive integer $k$, we define
\begin{equation}\label{O-k}
\mathcal{O}_k =\{ z\in\bS^n:\, K_{\alpha,M}f(z)+\mathcal M_\alpha(f)(z)>2^{k_0+k} \}.
\end{equation}
For each $k$, we then fix a Whitney covering  $\{B^k_i\}$ of $\mathcal O_k$.
As it is usual, one can associate to $f$ an atomic decomposition (see \cite{GL} for a proof for Hardy spaces in the unit ball, we refer also  to \cite{GP} for a proof in the general context considered in the last paragraph).

Namely, there exist a function $h_0$ and atoms $b_i^k$ corresponding to the Whitney covering $\{B_i^k\}$ so that the following equality holds in the distribution sense and almost everywhere.

\begin{equation}\label{atomic-decom-equation-1}
f=h_0 +\sum_{k=0}^{\infty}\sum_{i=0}^{\infty} b_i^k.
\end{equation}
 Here, $h_0$ is a  so called "junk atom" bounded by $c2^{k_0}$ while   the $b_i^k$'s are atoms supported in the $B_i^k$'s, bounded by $c2^{k+k_0}$, with moment conditions of order $N$.

Since $\|b_i^k\|_2 \sigma(B_i^k)^{-\frac 12}\leq \|b_i^k\|_\infty$, it is sufficient   to prove that
$$\sum_{i,k}\sigma(B_i^k){\Phi}(\Vert b_i^k\Vert_\infty)<\infty.$$ We have
\begin{align*}
\sum_{k=0}^{\infty}\sum_{i=0}^{\infty} \sigma(B_i^k){\Phi}(\Vert b_i^k\Vert_\infty)&\le \sum_{k=0}^{\infty}{\Phi}(2^{k+k_0})\sigma(\O_k)\\
&\le  c \int_1^\infty\frac{\Phi(t)}t\sigma\bigl( \{ \zeta\in\bS^n:\,
    K_\alpha^M f(\zeta)+\mathcal M_\alpha(f)(\zeta) \ge t\}\bigr) dt\\
&\lsim  c \int_1^\infty \Phi'(t)\sigma\bigl( \{ \zeta\in\bS^n:\,
    K_\alpha^M f(\zeta)+\mathcal M_\alpha(f)(\zeta) \ge t\}\bigr) dt\\
    &\le \Vert \Phi(K_\alpha^M f)\Vert_{L^1(\bS^n)}+\Vert\Phi(\mathcal M_\alpha (f)) \Vert_{L^1(\bS^n)}\\
& \le c \|f\|_{\H^{\Phi}}.
\end{align*}

\epf

As we
pointed out before, the atomic decomposition allows to obtain a
lot of result such as the molecular decomposition that we are
going to consider now.

\section{Factorization Theorem and Hankel operators}\label{Sectionfactorization}

Let us prove now the factorization theorem \ref{factorization}.
Let $A$ be a molecule associated to the ball $B=B(\zeta_0, r)$, with
$r<1$. We write
 $A=fg$, with $$g(z):=\log\left(\frac 4{1-\langle z,\zeta\rangle}\right),$$
where $\zeta:= (1-r)\zeta_0$. The constant $4$ has been chosen in such a way that $g$,
which is holomorphic on $\mathbb{B}^n$,  does not vanish.  We
first remark that we have the required inequality for $f$, that
is,
\begin{equation}\label{logmol}
    \Vert f\Vert_{{\rm mol}(B,L')}\lsim
\frac{\Vert A\Vert_{{\rm
mol}(B,L)}}{\log(e+\sigma(B)^{-1})}
\end{equation}
 for $L'<L$. Indeed, this is a direct consequence of
the two inequalities
$$|g(z)|\gsim \log (4/r)\simeq \log(e+\sigma(B)^{-1})\ \ \ z\in B(\zeta_0, 2r),$$
$$|g(z)|\gsim  \log(e+\sigma(B)^{-1})\left(\frac {r}{d(\zeta_0, z)}\right)^\eps\ \ \ z\notin B(\zeta_0, 2r)$$
for $\eps >0$. We have used  the fact that, for $u>1$ and $ v >e$, one has the inequality
$$\log (uv)\leq 2 u^{\eps} \log v.$$

We now prove that $g$ belongs uniformly to $\BMOA(\bB^n)$ or,
equivalently, that $(1-|z|^2)|\nabla
g|^2\simeq\frac{(1-|z|^2)}{|1-\langle z,\zeta\rangle|^2}$ is
a Carleson measure with uniform bound. Let $B_\rho=B(x_0,\rho)$
be a  ball on the boundary of $\bB^n$ and $T(B_\rho)$ be the
tent over this ball. We have to prove that
$$\int_{T(B_\rho)} \frac{(1-|z|^2)}{|1-\langle z,\zeta\rangle|^2}dV(z)\lsim \sigma(B_\rho)$$
 with constants that are independent of $B_\rho$, $r$ and $\zeta_0$, or, which is equivalent,
$$\int_0^{\rho}\int_{B_\rho} \frac{t}{(d(w,\zeta_0)+t)^2}\,dt\,d\sigma(w)\lsim \sigma(B_\rho).$$
 If $d(x_0,\zeta_0)\ge 2\rho$ then, for
$w\in B_\rho$, we have $d(w,\zeta_0)\ge \rho$ and the denominator is
bounded below by $\rho$, which allows to conclude. When
$d(x_0,\zeta_0)\le 2\rho$, then $B_\rho$ is included in $\widetilde
{B_\rho}:= B(\zeta_0, 3\rho)$ which has a measure comparable to
$B_\rho$. Integrating first in $t$, we have to prove that

$$\int_{\widetilde
{B_\rho}}\log\left(\frac \rho{d(\zeta_0,w)}\right) d\sigma(w)\lsim
\sigma(\widetilde {B_\rho}).$$ To prove this last inequality, we
cut the ball $\widetilde {B_\rho}$ in dyadic shells. We conclude
by using the inequality $$\sum_{j>0}j\sigma(B(\zeta_0,
2^{-j}\rho))\lsim \sigma(B_\rho),$$ which is a consequence of the
fact that
$$\sigma(B(z,2^{-j}\rho))\lsim 2^{-{jn}}\sigma(B(z,\rho)).$$
We have recalled this classical inequality in \eqref{doubling-balls}.

Assume now that  $\Psi(\Vert A\Vert_{{\rm mol}(B,L)})\sigma(B)\leq
1$. We use the fact that $\log t\simeq \log \Psi(t)$ to get
$$ \Vert A\Vert_{{\rm mol}(B,L)}\lsim \log (e+\sigma (B)^{-1})$$
and \eqref{logmol} to conclude that
$$\Phi(\Vert f\Vert_{{\rm mol}(B,L')})\lsim \Psi(\Vert
A\Vert_{{\rm mol}(B,L)}).$$

The weak factorization, that is, Theorem \ref{fact-dec}, follows
directly from Theorem \ref{mol-dec} (molecular decomposition) and
Theorem \ref{factorization} (factorization of molecules), with the
bound below for the quasi-norm of $f$ in the Hardy-Orlicz space.
The bound above uses the direct inequality for molecules, that is
 Proposition \ref{at-gives-mol}, and for products, that is
\eqref{ineq-product}.

We will give some complements to the characterization of symbols
of bounded Hankel operators. If $\exp \H$ denotes the class of
holomorphic functions $f$ such that $f(r\cdot)$ is uniformly in
the exponential class $\exp L$, then Proposition \ref{prod} is
still valid with $BMOA$ replaced by $\exp \H$. Let us remark that
the space $\exp \H$ is the dual space of $P_S (L\log L)$, that is,
the space of functions that may be written as $P_S g$, with $g\in
L\log L$, equipped with the norm
$$\|h\|_{P_S (L\log L)}:=\inf \{\|g\|_{L\log L}\,; h=P_S g\}.$$
Then, looking at the proof of Corollary \ref{Hankel}, we see that
we have as well the following improvement, since $P_S (L\log L)$
is contained in $\H^1$.
\begin{prop}If $b$ belongs to $\BMOA(\varrho_\Psi)$, then $h_b$  extends into a continuous operator from
$\mathcal H^\Phi(\bB^n)$ to $P_S (L\log L)$. \end{prop}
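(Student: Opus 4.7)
The plan is to mimic the proof of Corollary \ref{Hankel}, replacing $\BMOA$ by $\exp \mathcal{H}$ and $\H^1$ by $P_S(L\log L)$, exploiting the improved version of Proposition \ref{prod} and the duality statement $(P_S(L\log L))' = \exp \mathcal{H}$ announced in the paragraph preceding the proposition.

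First I would set up the duality: since $\exp \mathcal{H}$ is the dual of $P_S(L\log L)$, establishing $h_b(f)\in P_S(L\log L)$ with a quantitative norm bound reduces to showing
\begin{equation*}
|\langle h_b(f),g\rangle|\lsim \|b\|_{\BMOA(\varrho_\Psi)}\|f\|_{\H^\Phi}^{\lux}\|g\|_{\exp\mathcal{H}}^{\lux}
\end{equation*}
for every $g$ in a dense subset of $\exp \mathcal{H}$. Then, as in the proof of Corollary \ref{Hankel}, I would use the self-adjointness of the Szeg\"o projection and the fact that $g$ is holomorphic (so $P_S g=g$) to rewrite
\begin{equation*}
\langle h_b(f),g\rangle=\langle P_S(b\overline f),g\rangle=\langle b\overline f,g\rangle=\langle b,fg\rangle.
\end{equation*}

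At this point I would invoke the improved Proposition \ref{prod} (the variant mentioned in the text where $\BMOA$ is replaced by $\exp \mathcal{H}$) to bound the product: $fg\in \H^\Psi$ with
\begin{equation*}
\|fg\|_{\H^\Psi}^{\lux}\lsim \|f\|_{\H^\Phi}^{\lux}\|g\|_{\exp\mathcal{H}}^{\lux}.
\end{equation*}
Combining this with the fact that $b\in \BMOA(\varrho_\Psi)=(\H^\Psi)'$ (Theorem \ref{duality}) gives
\begin{equation*}
|\langle b,fg\rangle|\lsim \|b\|_{\BMOA(\varrho_\Psi)}\|fg\|_{\H^\Psi}^{\lux},
\end{equation*}
which chained with the previous inequality yields the target pairing estimate.

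The step I expect to require the most care is the reduction from a pairing estimate against $\exp \mathcal{H}$ to actual membership in $P_S(L\log L)$: a priori the stated duality $(P_S(L\log L))'=\exp \mathcal{H}$ only embeds $P_S(L\log L)$ into $(\exp \mathcal{H})'$, and one needs a Hahn--Banach/representation argument (or a limit of finite atomic/molecular approximations of $f$, for which $h_b(f)$ is visibly in $P_S(L\log L)$) to promote the bound to membership. The cleanest route, in parallel with Corollary \ref{Hankel}, is to test only on a dense set of $f$ (e.g.\ finite sums of molecules, given by Theorem \ref{mol-dec}), for which $h_b(f)$ is manifestly an element of $P_S(L\log L)$; the uniform pairing bound then extends $h_b$ by continuity to all of $\H^\Phi(\bB^n)$ with values in $P_S(L\log L)$.
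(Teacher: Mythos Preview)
Your proposal is correct and follows essentially the same approach as the paper: the paper simply says ``looking at the proof of Corollary~\ref{Hankel}'' with $\BMOA$ replaced by $\exp\H$ and $\H^1$ by $P_S(L\log L)$, which is exactly the duality computation you wrote out. Your final paragraph, addressing how to promote the pairing bound to actual membership in $P_S(L\log L)$ via a density argument, is a point the paper glosses over (both here and in Corollary~\ref{Hankel}); your treatment is in fact more careful than the paper's.
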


This has been proven by different methods in \cite{BM}.

The same reasoning allows to characterize as well the Hankel
operators which map $\mathcal H^\Phi(\bB^n)$ to $\H^1_{\rm weak}$.
\begin{prop} $h_b$ extends into a continuous operator from
$\mathcal H^\Phi(\bB^n)$ to $\H^1_{\rm weak}$ if and only if $b$
belongs to  $\BMOA(\varrho_\Phi)$.
\end{prop}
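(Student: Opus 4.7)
The plan is to follow the argument of Corollary \ref{Hankel} closely, replacing the weak factorization $\H^\Psi = \H^\Phi \cdot \BMOA$ by the trivial pointwise bound
$$\|fg\|_{\H^\Phi}^{\lux} \le \|g\|_{L^\infty}\|f\|_{\H^\Phi}^{\lux},$$
valid for $g\in L^\infty$, and substituting the duality $\H^1 = (\BMOA)'$ by the embedding $\H^1_{\rm weak}\hookrightarrow \H^p$ for $p<1$, which furnishes a boundary-integral estimate of $h_b f$ at the origin.

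\textbf{Necessity.} Suppose $h_b$ extends as a bounded operator from $\H^\Phi(\bB^n)$ to $\H^1_{\rm weak}(\bB^n)$. For every holomorphic $F\in \H^1_{\rm weak}$, the subharmonicity of $|F|^p$ (for $p<1$) yields $|F(0)|^p \le c_n\|F\|_{\H^p}^p$, and since the sphere has finite measure we also have $\|F\|_{\H^p}\lsim \|F\|_{\H^1_{\rm weak}}$; hence $|F(0)|\lsim \|F\|_{\H^1_{\rm weak}}$. Applying this with $F=h_b f=P_S(b\bar f)$ and using that $S(0,\cdot)$ is constant on $\bS^n$, so that $P_S(\varphi)(0)=c_n\int_{\bS^n}\varphi\,d\sigma$, we obtain for every polynomial $f$
$$\Bigl|\int_{\bS^n} b\,\overline{f}\,d\sigma\Bigr| \;=\; c_n^{-1}\,|(h_b f)(0)| \;\lsim\; \|h_b\|_{\rm op}\,\|f\|_{\H^\Phi}^{\lux}.$$
Since polynomials are dense in $\H^\Phi$, this identifies $b$ with an element of $(\H^\Phi(\bB^n))'=\BMOA(\varrho_\Phi)$ via Theorem \ref{duality}.

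\textbf{Sufficiency.} Assume $b\in\BMOA(\varrho_\Phi)$. Given $f\in \H^\Phi$, apply Theorem \ref{atomic-decom} to write $f=\sum_j P_S(a_j)$ with $\sum_j \Phi(\|a_j\|_2\sigma(B_j)^{-1/2})\sigma(B_j) \lsim \|f\|_{\H^\Phi}$, so that
$$h_b f \;=\; \sum_j P_S\bigl(b\,\overline{P_S(a_j)}\bigr).$$
Each summand is estimated as a molecule attached to $B_j$ by combining three ingredients: the decay estimate \eqref{far} for $P_S(a_j)$ away from $B_j$, the local $L^2$-control $\|b-P\|_{L^2(B_j)}\lsim \|b\|_{\BMOA(\varrho_\Phi)}(\varrho_\Phi(\sigma(B_j))\sigma(B_j))^{1/2}$ coming from the very definition of $\BMOA(\varrho_\Phi)$, and the vanishing moment condition $\int a_j\overline{P}\,d\sigma=0$ for $P\in\mathcal P_N$, which allows one to replace $b$ by $b-P$ on $B_j$ in the integrand. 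Summing the resulting molecular bounds and invoking the standard level-set argument for the maximal function of $\sum_j P_S(b\,\overline{P_S(a_j)})$ converts the atomic control into the desired weak-$\H^1$ estimate.

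\textbf{Main obstacle.} The chief difficulty lies in the sufficiency step, precisely where Corollary \ref{Hankel} used the strictly stronger assumption $b\in\BMOA(\varrho_\Psi)$ and the factorization to produce convergence in $\H^1$ directly. Under the weaker assumption $b\in\BMOA(\varrho_\Phi)$, the same molecular estimates lose a logarithmic factor of the form $\log(\sigma(B_j)^{-1})$, which obstructs absolute summability in $\H^1$: this is exactly the gap between $\Phi$ and $\Psi(t)=\Phi(t/\log(e+t))$. Passing from $\H^1$ to $\H^1_{\rm weak}$ is precisely what absorbs this logarithmic loss, but converting the pointwise molecular decay into a weak-type distributional bound on the maximal function requires the careful cancellation bookkeeping sketched above.
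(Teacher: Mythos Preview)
Your necessity argument is fine and is essentially the paper's one-line observation $|\langle b,f\rangle|\lsim \|h_b\|\,\|f\|_{\H^\Phi}$, with the extra justification via evaluation at the origin made explicit.

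The sufficiency, however, is not a proof. You set up an atomic decomposition of $f$, propose to treat each $P_S(b\,\overline{P_S(a_j)})$ as a molecule, and then concede in your ``Main obstacle'' paragraph that the molecular estimates lose exactly a factor $\log(\sigma(B_j)^{-1})$. Your only remedy is the assertion that ``passing from $\H^1$ to $\H^1_{\rm weak}$ is precisely what absorbs this logarithmic loss'' together with ``careful cancellation bookkeeping sketched above''. Nothing is sketched: you have not explained how the weak-type quasi-norm of a sum of molecules can be controlled by a sum that diverges termwise in the strong $\H^1$ norm. This is the entire content of the statement, and it is missing.

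The paper avoids this difficulty altogether by a short duality argument that, ironically, uses exactly the bound you announced in your opening plan and then abandoned. Instead of aiming directly at $\H^1_{\rm weak}$, one proves the stronger inclusion $h_b(\H^\Phi)\subset P_S(L^1)$, which suffices since $P_S(L^1)\subset \H^1_{\rm weak}$. The dual of $P_S(L^1)$ is $\H^\infty$, so one needs only
\[
|\langle h_b f,\,g\rangle|=|\langle b,\,fg\rangle|\ \lsim\ \|b\|_{\BMOA(\varrho_\Phi)}\,\|fg\|_{\H^\Phi}^{\lux}\ \le\ \|b\|_{\BMOA(\varrho_\Phi)}\,\|g\|_{\H^\infty}\,\|f\|_{\H^\Phi}^{\lux}
\]
for $g\in\H^\infty$. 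The last inequality is precisely your ``trivial pointwise bound''. No atomic decomposition, no molecules, no logarithmic loss: the whole sufficiency reduces to the fact that multiplication by a bounded holomorphic function preserves $\H^\Phi$.
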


The necessity of the condition follows from the fact that
$$|\langle b, f\rangle|=|\int_{\mathbb{S}^n}b\bar f d\sigma|\leq
\|h_b\|\|f\|_{\H_\Phi},$$ so that $b$ defines a continuous linear
form on the space $\H_\Phi$. To prove the sufficiency, it is
sufficient to prove that $h_b$ maps $\mathcal H^\Phi(\bB^n)$ to
$P_S(L^1)$ when $b$ is in $\BMOA(\varrho_\Phi)$. But the dual of $P_S(L^1)$ identifies with $\mathcal H^{\infty}$. So, using
duality, it is sufficient to prove that multiplication by an
element of the dual, that is, $\H^\infty$, maps $\H_\Phi$ into
itself. This is straightforward.

\section{Extension of the results in a general setting}
\label{Casgeneral}

We are now going to give the main points which allow to extend our results in a larger class of domain including strictly pseudoconvex domains and  convex domains of finite type.
Let $\Omega$ be a smooth bounded domain in $\bC^n$. Define the
Hardy-Orlicz spaces as the space of holomorphic functions $f$ so
that
 $$\sup_{0<\eps<\eps_{0}} \int_{\delta(w)=\eps}
{\Phi}(|f|)(w)
\,d\sigma_\eps(w) <\infty
$$
where ${\Phi}$ is as before of lower type $p$, $\delta(w)$ is the distance from $w$ to
$\partial\Omega$ and $d\sigma_\eps$ the Euclidean measure on the level set $\delta(w)=\eps$. Recall that the usual  Hardy space of holomorphic functions $\H^p (\Omega)$ on
$\Omega$ corresponds to the case $\Phi(t)=t^p$.

\subsection{Geometry of H-domains}

\begin{defn}{\rm We say that $\Omega$  is an $H$-domain if it is a smoothly bounded pseudoconvex domain of finite type and if, moreover, for each $\zeta\in\p\Omega$ there exist a neighborhood $V_\zeta$ and a biholomorphic map $\Phi_\zeta$ defined on $V_\zeta$  such that $\Phi_\zeta(\Omega\cap V_\zeta)$ is geometrically convex. }
\end{defn}
We recall  that a point $\zeta\in\p\Omega$ is said to be of finite type if the (normalized) order of contact with $\p\Omega$ of complex varieties at $\zeta$ is finite. By \cite{BoSt} and our assumption it suffices to consider the order of contact of $\p\Omega$ at $\zeta$ with $1$-dimensional complex manifolds, see\cite{BoSt} and references therein. The domain $\Omega$ is said to be of finite type if every point on $\p\Omega$ is of finite type.We denote by $M_\Omega$ the maximum of the types of points on $\p\Omega$. Notice that the class of $H$-domains contains both the convex domains of finite type   and the strictly pseudoconvex domains.
\medskip

We describe the geometry of an $H$-domain $\Omega$.This is done locally, using a partition of unity. Moreover, in a neighborhood of a point $\zeta\in\p\Omega$, using local coordinates and the assumption, we may in fact assume that $\Omega$ is geometrically convex. Thus, we do not lose generality if we assume that it is globally convex. Then, there exist an $\eps_0>0$ and a defining function $\varrho$ for $\Omega$ such that for $-\eps_0<\eps<\eps_0$ the sets $\Omega_\eps:= \{z\in\bC^n: \varrho(z)<\eps\}$ are all convex.  Moreover, denote by $U=U_{\eps_0}$ the tubular neighborhood of $\p\Omega$ given by $\{z\in\bC^n: -\eps_0<\varrho(z)<\eps_0\}$.  By taking $\eps_0>0$ sufficiently small, we may assume that on $\ov{U}$ the normal projection $\pi$ of $U$ onto $\p\Omega$ is uniquely defined. Let $z\in U$ and let $v$ be a unit vector in $\bC^n$.  We denote by $\tau(z,v,r )$ the distance from $z$ to the surface $\{z':\varrho(z')=\varrho(z)+r \}$ along the complex line determined by $v$. One of the basic relations among the quantities defined above is the following.There exists a constant $C$ depending only on the geometry of the domain such that given $z\in U$, any unit vector $v\in\bC^n$ that is orthogonal to the level set of the function $\varrho$ and $r \le r_0$  and $\eta<1$ we have
\begin{equation}\label{dilating}
 C^{-1}\eta^{1/2}\tau(z,v,r)\leq \tau(z,v,\eta r)
\leq C\eta^{1/M_\Omega}\tau(z,v,r) .
\end{equation}
We next define the $r$-{\em extremal orthonormal basis} $\{v^{(1)},\dots,v^{(n)}\}$ at $z$, which generalize the choices that we have done for the unit ball. The first vector is given by the direction transversal direction to the level set of $\varrho$ containing $z$, pointing outward. In the complex directions orthogonal to $v^{(1)}$ we choose $v^{(2)}$ in such a way that $\tau(z,v^{(2)},r )$ is maximum. We repeat the same procedure to determine the remaining elements of the basis. We set
$$\tau_j (z,r ) = \tau(z,v^{(j)},r ).$$
By definition, $\tau_1(z, r)\simeq r$.
The polydisc $Q(z,r )$ is now given as
$$Q(z,r ) = \{w: |w_k|\le \tau_k(z,r ),\, k=1,\dots,n\}.$$
Here  $(w_1,\dots,w_n)$ are the coordinates determined by $r $-extremal orthonormal basis $\{v^{(1)},\dots,v^{(n)}\}$ at $z$. Notice that these coordinates $(w_1,\dots,w_n)=(w_1^{z,r },\dots,w_n^{z,r })$ depend on $z$ and on  $r $.  They are called {\sl special coordinates at the point $z$ and at scale $r$}. The quasi-distance is  defined by setting
\begin{equation}\label{quasi-distance}
d_b(z,w) = \inf\{ r : w\in Q(z,r )\}.
\end{equation}

Notice that by the above properties the sets $Q(z,r )$ are in fact equivalent to the balls in the quasi-distance $d_b$. We also consider balls on the boundary $\p\Omega$ defined in terms of $d_b$. For $\zeta\in \p\Omega$ and $r >0$ we set $$ B(\zeta,r)=\{z\in \p\Omega: d_b(z,\zeta)<r \}. $$ These balls are equivalent to the sets $Q(\zeta,r )\cap\p\Omega$. Moreover, we define the function $d$ on $\ov{\Omega}\times\ov{\Omega}$ by setting
\begin{equation}\label{defn-d}
d(z,w)=\delta(z)+\delta(w)+d_b\bigl(\pi(z),\pi(w)\bigr)\, ,
\end{equation}
where $\pi$ is the normal projection of a point $z$ onto the boundary. We now set
$$ {\bf \tau}(z,r)= ( \tau_1(z,r)\cdots, \tau_n(z,r)). $$
 Then, for $\alpha$ a multiindex, we note
 $$ {\bf \tau}^{\alpha}(z,r)= \prod_{j=1}^{n} \tau_j^{\alpha_j}(z,r). $$When $\Omega$ is strictly pseudoconvex, we have simply  ${\bf \tau}^{\alpha}(z,r)\simeq r^{\frac{|\alpha|+\alpha_1}2}$.
\medskip
Let  $\sigma$ denotes the surface measure on $\p\Omega$.
Then, one has
$$ \sigma\bigl(B(w,r)\bigr) \simeq {\bf \tau}^{(1,2, \cdots, 2)}(w,r). $$
Moreover, the property \eqref{doubling-balls} is replaced by the double inequality
\begin{equation}\label{doubling-balls-finite}
       \lambda^n \sigma(\zeta_0, r)\lsim \sigma(B(\zeta_0,\lambda r))\lsim \lambda^{1+(2n-2)/M_\Omega}  \sigma(B(\zeta_0,\lambda r)),
\end{equation}
As we said before, all these definitions are local, and may be given in the context of $H$-domains.

As in the case of the unit ball, if $w_j$ are the coordinates of $w-z$ in the basis  $\{v^{(1)},\dots,v^{(n)}\}$ and if $w_j=s_j+it_j$, then $s_j$ for $j\geq 2$ and $t_j$ for $j\geq 1$ define $2n-1$ local coordinates of $\p\Omega$ in a neighborhood of $z$. We will still speak of {\sl special coordinates at the point $z$ and the scale $r$}.
\medskip

In the neighborhood of $z\in \partial \Omega$, the hypersurface $\partial \Omega$ coincides with the graph $\Re w_1=h(\Im w_1, w')$, with $w'=(w_2, \cdots, w_n)$. As in the case of the unit ball,  we are interested in estimates on $D^{(\alpha, \beta)}_wS(\, (h(t_1, s'+it')+it_1, w'))$, where $\alpha$ is an $n-1$-index of derivation in the variable $s'$, while $\beta$ is an $n$-index of derivation in $t$. The equivalent of \eqref{estimateSzego} is given by the estimates of McNeal and Stein \cite {McS1} and \cite {McS2}, see also \cite{BPS}, Lemma 4.7 for an analogous context.
For $d(w, z)<r$ and $\zeta \notin B(z, Cr)$, we have
 \begin{equation}\label{estimateSzegofinite}|D^{(\alpha, \beta)}_wS(\zeta, (h(t_1, s'+it')+it_1, w'))|\lsim {\bf \tau}^{-(1+\beta_1,2+\alpha_2+\beta_2, \cdots, 2+\alpha_n+\beta_n)}(z, d(w, z)).  \end{equation}

As in \cite{BPS}, we will also use the existence of a support function given by Diederich and Fornaess \cite{DF}.
\begin{thm}\label{supp-funct} Let $\Omega$ be a smoothly bounded pseudoconvex $H$-domain of finite type in $\bC^n$.  Then there exist a neighborhood $U$ of the boundary $\p\Omega$ and a function $H\in{\mathcal C}^\infty(\bC^n\times U)$ such that the following conditions hold:
\begin{itemize}
\item[$(i)$] $H(\cdot,w)$ is holomorphic on $\Omega$ for all $\zeta\in U$;
\item[$(ii)$] there exists a constant $c_1>1$ such that
$$\frac{1}{c_1} d(z,w) \le|H(z,w)|\le c_1 d(z,w).$$
\end{itemize}
\end{thm}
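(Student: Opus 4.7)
The plan is to reduce to the convex case by means of the biholomorphic charts provided by the $H$-domain condition, then construct a local Diederich--Fornaess-type function in each chart, and finally patch these local candidates together globally via a $\overline\partial$-correction.

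\emph{Step 1 (reduction to convex).} By definition, for each $\zeta\in\partial\Omega$ there is a neighborhood $V_\zeta$ and a biholomorphism $\Phi_\zeta$ with $\Phi_\zeta(\Omega\cap V_\zeta)$ geometrically convex. Since the non-isotropic quasi-distance $d_b$ (and hence $d$ in \eqref{defn-d}) is preserved, up to equivalent constants, under biholomorphisms that extend smoothly to the closure, it suffices to construct a local support function on a convex domain of finite type and then pull it back by $\Phi_\zeta^{-1}$.

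\emph{Step 2 (convex, finite type).} For a smoothly bounded convex domain $D$ of finite type with defining function $\varrho$, at each boundary point $w$ the complex affine tangent hyperplane does not intersect $D$. Accordingly, one forms the holomorphic part of the Taylor expansion of $\varrho$ at $w$ up to order $M_\Omega$,
$$
H_D(z,w) \;=\; -\sum_{|\alpha|\leq M_\Omega}\frac{1}{\alpha!}\,\bigl(\partial_w^\alpha \varrho\bigr)(w)\,(z-w)^\alpha,
$$
where the sum runs only over holomorphic derivatives in the $w$-variables. By convexity, $\Re H_D(z,w)\geq c\,\varrho(w)$ for $z\in D$, and McNeal's polydisc estimates, which control the contact of $\partial D$ with complex lines in terms of the $\tau_j(w,r)$, give the two-sided bound $|H_D(z,w)|\simeq d(z,w)$ on $V_\zeta\times V_\zeta$.

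\emph{Step 3 (transport and local patching).} Pull back via $\Phi_\zeta$ to obtain a function $H_\zeta(z,w):=H_D\bigl(\Phi_\zeta(z),\Phi_\zeta(w)\bigr)$, holomorphic in $z$, smooth in $w$, and satisfying $|H_\zeta(z,w)|\simeq d(z,w)$ on $V_\zeta\times V_\zeta$. Cover $\partial\Omega$ by finitely many such neighborhoods $V_{\zeta_1},\dots,V_{\zeta_N}$, and fix a smooth partition of unity $\{\chi_k\}$ on a tubular neighborhood $U$ of $\partial\Omega$ subordinate to the cover. Set
$$
\widetilde H(z,w) \;=\; \sum_{k=1}^{N} \chi_k(w)\,H_{\zeta_k}(z,w).
$$
This candidate is smooth on $\bC^n\times U$ and satisfies the modulus bound of $(ii)$, but is in general only approximately holomorphic in $z$.

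\emph{Step 4 ($\overline\partial$-correction).} Write $\overline\partial_z\widetilde H=:g(z,w)$, a smooth $(0,1)$-form in $z$ supported away from the diagonal $z=w$ (since each $H_{\zeta_k}$ is holomorphic on its own patch, the failure of holomorphy comes only from the interplay between different $H_{\zeta_k}$ outside the common zero set). Use H\"ormander's $L^2$ theory to solve $\overline\partial_z u=g$ with a plurisubharmonic weight of the form $\varphi=A|z|^2-N\log\bigl(|\widetilde H(z,w)|^2+\varepsilon\bigr)+$ the Diederich--Fornaess bounded plurisubharmonic exhaustion of $\Omega$, chosen so that $u$ vanishes to sufficiently high order at $z=w$. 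Define $H(z,w):=\widetilde H(z,w)-u(z,w)$. Then $H(\cdot,w)$ is holomorphic on $\Omega$, and the control on $u$ ensures $|u(z,w)|\ll d(z,w)$ near the diagonal, so the two-sided bound $(ii)$ is preserved.

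\emph{Main obstacle.} Steps 1--3 are essentially routine once one has McNeal's polydisc geometry, but Step 4 is the crux and is precisely the content of Diederich--Fornaess \cite{DF}: one must choose the weight and the singularity along $\{z=w\}$ finely enough that the correction is of \emph{strictly higher order} than $d(z,w)$, while remaining uniform in $w\in U$. This requires the global Diederich--Fornaess bounded plurisubharmonic exhaustion, whose existence is in turn the deep ingredient, and it is here that the pseudoconvexity of $\Omega$ (not merely the local convexity provided by the $H$-domain hypothesis) plays a decisive role.
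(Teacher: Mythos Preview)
The paper does not prove this theorem. The sentence immediately preceding the statement reads ``we will also use the existence of a support function given by Diederich and Fornaess \cite{DF}'', and the result is used as a black box in the factorization argument that follows. There is therefore no ``paper's own proof'' to compare against.

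Regarding your sketch on its own terms, there is a genuine confusion in Steps~3--4. Since your partition of unity $\{\chi_k\}$ depends only on $w$, the candidate
\[
\widetilde H(z,w)=\sum_k\chi_k(w)\,H_{\zeta_k}(z,w)
\]
is already holomorphic in $z$ wherever it is defined; there is nothing for $\overline\partial_z$ to correct. The real obstruction is that each $H_{\zeta_k}(\cdot,w)$ is only defined (and holomorphic) for $z\in V_{\zeta_k}$, whereas the theorem demands that $H(\cdot,w)$ be holomorphic on all of $\Omega$. A partition of unity in $w$ does nothing to globalize the $z$-domain. What is actually needed is a holomorphic extension in $z$ across $\Omega$ that still vanishes to the right order along the diagonal, and this is precisely the delicate construction. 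Your own closing paragraph concedes that Step~4 ``is precisely the content of Diederich--Fornaess \cite{DF}'', which makes the argument circular: the step you defer to the reference is the theorem you set out to prove.
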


With all these definitions, we claim the following.

\noindent {\bf Statement of results for $H$-domains.}  {\sl The analogues of Theorems \ref{max-charact} to Corollary \ref{Hankel} are valid for the $H$-domain $\Omega$ with the following modifications:  $N_p:=(\frac 1p-1)(M_\Omega+2n-2)-1$ in Definition \ref{realHardy-Orlicz}; in Proposition \ref{molecule-property}, the condition is $L<\frac{2N+2}{M_\Omega}$, while in Proposition \ref{at-gives-mol}, we have $L_p:=(2/p-2)\left(1+\frac{2n-2}{M_\Omega}\right)$. Finally, for the definition of $\BMO(\varrho)$, we have to take $N+1>\alpha(M_\Omega+2n-2)$.}

Let us sketch the modifications to be done. Atoms adapted to a ball $B:=B(\zeta_0, r_0)$ are defined as before,  using special coordinates at $\zeta_0$ and at scale $r_0$ to define the vanishing moment conditions. Remark that the coordinates depend on $r_0$, but the space $\mathcal P_N(\zeta_0)$ does not.

Then, in Lemma \ref{estimA}, the second estimate has to be replaced by
\begin{equation}\label{farfinite}
  |A(\zeta)| \lsim \biggl(\frac{r_0}{d(\zeta,\zeta_0)}\biggr)^{
\frac{N+1}{M_\Omega}} \frac{\Vert a\Vert_2\sigma(B)^{\frac 12}}{\sigma(\zeta_0, d(\zeta, \zeta_0))} \  \ \mbox{\rm for} \
d(\zeta,\zeta_0)\ge Cr_0.
\end{equation}
The proof is the same, using the estimates \eqref{estimateSzegofinite} in place of \eqref{estimateSzego}.
\medskip

Next, molecules are defined as follows.
\begin{defn}\label{moleculefinite}
A holomorphic function $A\in \H^2(\Omega)$ is called a molecule of
order $L$, associated to the ball $B:=B(z_0,r_0)\subset \partial \Omega$, if
it satisfies
\begin{equation}\label{molfinite}
\sup_{\eps<\eps_0}\int_{\partial \Omega}\left(1\,+\,\frac{d(z_0,\xi)^{L}}{r_0^{L}}\times \frac{\sigma(B(z_0, d(z_0,\xi)))}{\sigma(B(z_0, r_0))}\right)|A(\xi-\eps \nu(\xi))|^2
\frac{d\sigma(\xi)}{\sigma(B)}<\infty,
\end{equation}
\end{defn}
with $\nu$ the outward normal vector. In this case, the left hand side is $\Vert A\Vert_{{\rm
mol}(B,L)}^2$. It follows  from \eqref{farfinite}, cutting the integral into dyadic balls, that the projection of an atom related to the ball $B:=B(z_0,r_0)\subset \partial \Omega$ is a molecule of order $L<\frac{2N+2}{M_\Omega}$.

Finally, to see that a molecule of order $L$ is in the Hardy space $\H^{\Phi}$, we  prove that, with $B_k:=B(z_0,2^k r_0)$,
$$\int_{B_k\setminus B_{k-1}}\Phi(g) \frac{d\sigma}{\sigma(B)}\lsim 2^{-k\varepsilon}\Phi
\left(\left(2^{kL}\frac{\sigma(B_k)}{\sigma(B)}\int_{B_k\setminus B_{k-1}}
g(\xi)^2\frac{d\sigma(\xi)}{\sigma(B)}\right)^{1/2}\right)$$ for
some $\varepsilon
>0$. To prove this last inequality, we use again Jensen Inequality
\eqref{jensen} for the measure $d\sigma$ on $B_k$, divided by its total mass $\sigma(B_k)$. This gives
$$\int_{B_k\setminus B_{k-1}}\Phi(g) \frac{d\sigma}{\sigma(B)}\lsim \frac{\sigma(B_k)}{\sigma(B)}\Phi
\left(\left(\int_{B_k\setminus B_{k-1}}
g(\xi)^2\frac{d\sigma(\xi)}{\sigma(B_k)}\right)^{1/2}\right).$$ We
conclude by using the fact that $\Phi$ is of lower type $p$, which
allows to write that $$\frac{\sigma(B_k)}{\sigma(B)}\Phi(t)\lsim \Phi\left(\left (\frac{\sigma(B_k)}{\sigma(B)}\right)^{\frac 1p}t\right).$$  Using \eqref{doubling-balls-finite}, one finds that it is
sufficient to choose $L>L_p:=2(1/p-1)\left(1+\frac{2n-2}{M_\Omega}\right)$.

\medskip

Up to now, we have given the modifications for having the atomic decomposition, the continuity of the Szeg\"o projection, the duality. It remains to see the modifications in the proof of the factorization theorem. As at the beginning of Section \ref{Sectionfactorization},  we factorize each
molecule $A$ associated to a ball $B:=B(\zeta_0, r)$ as $B=fg$, with $B$ a molecule and $g$ a $BMOA$-function.

For this  factorization, we use the support function given in Theorem \ref{supp-funct}.
We set
$H_0=H(\cdot,\tilde \zeta_0)$, where $\tilde
\zeta_0=\zeta_0-r\nu(\zeta_0)$.  We choose $g=\log(cH_0^{-1})$ with $c$ so that $g$ does not vanish in $\Omega$.

We have as before the inequality
\begin{equation}\label{logmol-finite}
    \Vert f\Vert_{{\rm mol}(B,L')}\lsim
\frac{\Vert A\Vert_{{\rm
mol}(B,L)}}{\log(e+\sigma(B)^{-1})}
\end{equation}
 for $L'<L$. Just use (ii) in Theorem \ref{supp-funct}.

We now prove that $\log(cH_0^{-1})$ belongs  to $BMOA$ with bounds independent of $\zeta_0$ and $r$. The proof follows the same line as the one in the unit ball, using that $|H_0|$ and $|\nabla H_0$ are uniformly bounded in $\Omega$.

This finishes the proof of the factorization theorem.

\epf

\end{document}